\documentclass[a4paper]{amsart}

\usepackage{amsmath, amssymb, amsthm}
\usepackage{xypic, graphicx}
\usepackage{enumerate}

\usepackage{aliascnt} 
\usepackage[colorlinks=true,linkcolor=blue,citecolor=blue,urlcolor=blue,citebordercolor={0 0 1},urlbordercolor={0 0 1},linkbordercolor={0 0 1}]{hyperref} 
\usepackage[nameinlink]{cleveref}

\theoremstyle{plain}
\newtheorem{theorem}{Theorem}[section]
\newtheorem{corollary}[theorem]{Corollary}
\newtheorem{lemma}[theorem]{Lemma}

\newtheorem{proposition}[theorem]{Proposition}

\theoremstyle{definition}

\newtheorem{example}[theorem]{Example}
\newtheorem{remark}[theorem]{Remark}


\numberwithin{equation}{section}

\newcommand{\cU}{\mathcal U}
\newcommand{\cX}{\mathcal X}
\newcommand{\cY}{\mathcal Y}
\newcommand{\cZ}{\mathcal Z}

\renewcommand{\cH}{\mathcal H}

\newcommand{\Gm}{\mathbb G_m}

\newcommand{\PP}{\mathbb P}
\newcommand{\fP}{\mathfrak P}

\DeclareMathOperator{\Aut}{Aut}

\newcommand{\bk}{\Bbbk}

\newcommand{\oh}{\mathcal{O}}

\newcommand{\bG}{\mathbb{G}}
\newcommand{\bZ}{\mathbb{Z}}

\newcommand{\cP}{\mathcal{P}}
\newcommand{\bP}{\mathbb{P}}
\newcommand{\cM}{\mathcal{M}}
\newcommand{\cQ}{\mathcal{Q}}
\newcommand{\cI}{\mathcal{I}}
\newcommand{\cW}{\mathcal{W}}
\newcommand{\base}{\Bbbk}
\newcommand{\epf}{\qed \vspace{+10pt}}

\newcommand{\tensor}{\otimes}

\DeclareMathOperator{\Spec}{Spec}
\DeclareMathOperator{\Stab}{Stab}
\DeclareMathOperator{\id}{id}
\DeclareMathOperator{\im}{im}

\DeclareMathOperator{\cms}{cms}
\DeclareMathOperator{\triv}{triv}
\DeclareMathOperator{\GL}{GL}

\newcommand{\co}{\colon}
\renewcommand{\bar}{\overline}

\newcommand{\uIsom}{\underline{{\rm Isom}}}

\newcommand{\into}{\hookrightarrow} 
\renewcommand{\xto}{\xrightarrow} 
\newcommand{\iso}{\stackrel{\sim}{\to}}
\usepackage{stmaryrd}
\newcommand{\rigidify}{ \hspace{-.1cm} \fatslash \hspace{.10cm}}

\usepackage{pbox}


\begin{document}

\title{Structure results for torus fixed loci}
\date{\today}
\author[J. Alper]{Jarod Alper}
\address[J. Alper]{Department of Mathematics\\
  University of Washington\\
  Box 354350 \\
  Seattle, WA 98195--4350\\
  U.S.A.}
\email{jarod@uw.edu}
\author[F. Janda]{Felix Janda}
\address[F. Janda]{Department of Mathematics\\
  University of Illinois Urbana--Champaign\\
  Urbana, IL 61801\\
  U.S.A.}
\email{fjanda@illinois.edu}
\begin{abstract}
  Motivated by localization theorems on moduli spaces, we prove a structural classification of Deligne--Mumford stacks with an action of a torus where the induced action on the coarse moduli space is trivial.  We also establish a general local structure theorem for morphisms of algebraic stacks.
\end{abstract}

\maketitle

\section{Introduction}

The main goal of this paper is to provide a description of 
Deligne--Mumford stacks $X$ with a torus action where 
the induced action on the coarse moduli space $X_{\cms}$ is trivial. 
Such actions arise naturally in moduli theory as the fixed locus of a torus action on a Deligne--Mumford stack (see \Cref{rmk:moduli}).  The motivation for this paper is that an explicit understanding of such actions could be useful for applications of localization theorems.  

A prototypical example is the non-trivial $\bG_m$-action on $B \mu_r$ arising from the Kummer sequence $0 \to \mu_r \to T=\bG_m \xto{r} \bG_m \to 0$:  the induced map $B\mu_r \to BT$ is a $\bG_m$-torsor as it's the base change of $\Spec \bk \to B \bG_m$.  The main theorem below states essentially that every $\bG_m$-action on a Deligne--Mumford stack $X$, which is trivial on $X_{\cms}$, is built from this nontrivial action on $B \mu_r$ for some $r$. 

\subsection{The main theorem}
We work over an algebraically closed field $\bk$.

\begin{theorem} \label{thm:main}
      Let $\cX = [X/\bG_m^n] \to B \bG_m^n$ be a separated, finite type, and relatively tame Deligne--Mumford morphism of algebraic stacks (i.e., $X$ is a separated, finite type, and tame Deligne--Mumford stack).  Assume that $\cX$ is reduced and connected, and that the $\bG_m^n$-action on the coarse moduli space $X_{\cms}$ of $X$ is trivial. 
      \begin{enumerate}[(1)]
        \item  \label{thm:main1} 
          There is a diagonalizable group $\mu_r := \mu_{r_1} \times \cdots \times \mu_{r_n}$ for a unique tuple $r = (r_1, \ldots, r_n)$ of integers with $r_1 | r_2 | \cdots | r_n$ and a central closed subgroup $\mu_{r,\cX} \subset I_{\cX/B\bG_m^n}$ of the relative inertia stack restricting to a subgroup $\mu_{r,X} \subset I_X$ such that the $\bG_m^n$-action on $X$ descends to a trivial $\bG_m^n$-action on the Deligne--Mumford stack $Y := X \rigidify \mu_{r,X}$ and there is a factorization 
        $$\xymatrix{
          \cX \ar[rd]  \ar[r]  
            & \cX \rigidify \mu_{r,\cX}  \cong Y \times B\bG_m^n \ar[d]\\
            & X_{\cms} \times B\bG_m^n
        }$$
        which satisfies the following universal property:
        if $\cX \to Y' \times B \bG_m^n \to X_{\cms} \times B \bG_m^n$ is another factorization where $Y'$ is a Deligne--Mumford stack, then there is a morphism $g \co Y \to Y'$ unique up to unique isomorphism fitting in the diagram
            $$\xymatrix{
              \cX \ar[r] \ar[rd]
                & Y \times B\bG_m^n \ar@{-->}[d]^{g \times \id} \ar[r]
                & X_{\cms} \times B\bG_m^n \\
                & Y' \times B\bG_m^n . \ar[ur] 
            }$$
        \item  \label{thm:main2}
        Consider the exact sequence $1 \to \mu_r \to T \to \bG_m^n \to 1$ where $T = \bG_m^n \to \bG_m^n$ is defined by $(t_1, \ldots, t_n) \mapsto (t_1^{r_1}, \ldots, t_n^{r_n})$, 
        and let 
        $BT \to B\bG_m^n$ be the induced $\mu_r$-gerbe.  
          The $\mu_r \times \mu_r$-gerbe $X \times BT \to Y \times B \bG_m^n$ factors as 
          $$X \times BT\to \cX \to Y \times B \bG_m^n.$$
          In particular, $\cX \cong p_1^* X \wedge^{\mu_r} p_2^* BT$ over $Y \times B\bG_m^n$.
        \item \label{thm:main3} 
          There is a central closed fppf subgroup $T \times \cX \subset I_{\cX}$ and a factorization
          $$\xymatrix{
            \cX \ar[r] \ar[rd]
              & \cX \rigidify \mu_{r,\cX} \cong Y \times B \bG_m^n \ar[d]^{p_1} \\
              & \cX \rigidify T \cong Y
            }$$
            and $\cX \rigidify T$ is identified with the rigidification $(\cX \rigidify \mu_{r,\cX}) \rigidify \bG_m^n$ by $\bG_m^n = T / \mu_r$.
            Furthermore, $\cX \to Y$ is the $T$-gerbe associated
            to the $\mu_r$-gerbe $X \to Y$ with respect to the
            canonical inclusion of group schemes $\mu_r \subset T$.
      \end{enumerate}
  \end{theorem}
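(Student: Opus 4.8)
The plan is to package the entire $\bG_m^n$-action into a single stabilizer group scheme and to show that, because $\bG_m^n$ is connected and $X$ is tame, this group scheme is an extension of the torus by the inertia whose finite diagonalizable "defect" is exactly $\mu_{r,X}$; all three parts are then read off from this extension. First I would form the stabilizer group scheme of the action, $\Stab := (\bG_m^n\times X)\times_{(\sigma,\mathrm{pr}),\,X\times X}X$, a group scheme over $X$ equipped with its projection $\pi\co\Stab\to\bG_m^n\times X$. Triviality of the action on $X_{\cms}$ together with connectedness of $\bG_m^n$ shows that for every point the translate $t\cdot x$ lies in the same fibre of $X\to X_{\cms}$ and is isomorphic to $x$; hence $\pi$ is an $I_X$-torsor, in particular faithfully flat, giving a short exact sequence $1\to I_X\to\Stab\xrightarrow{\pi}\bG_m^n\times X\to 1$ of group schemes over $X$. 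Passing to identity components, $\Stab^0$ is fibrewise isogenous onto the torus $\bG_m^n$ with finite kernel, so by tameness it is itself a torus $T_X$, and I set $\mu_{r,X}:=\ker(T_X\to\bG_m^n)=T_X\cap I_X$, a finite diagonalizable central subgroup of $\Stab^0$. The same construction applied to $\cX\to B\bG_m^n$ in place of $X\to X_{\cms}$ produces the central subgroups $\mu_{r,\cX}\subset I_{\cX/B\bG_m^n}$ and $T\times\cX\subset I_{\cX}$ appearing in parts (2) and (3).

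Next I would extract the tuple $r$ and prove triviality after rigidification. The finite flat diagonalizable group $\mu_{r,X}$ is the Cartier dual of the cokernel of the isogeny $T_X\to\bG_m^n$, i.e.\ of a map of cocharacter lattices $\bZ^n\xrightarrow{A}\bZ^n$; over the reduced connected $X$ this cokernel is a constant finite abelian group, and its elementary-divisor (Smith normal form) decomposition yields the unique tuple $r_1\mid\cdots\mid r_n$ and an identification $\mu_{r,X}\cong\mu_{r_1}\times\cdots\times\mu_{r_n}$, simultaneously producing the isogeny $T=\bG_m^n\to\bG_m^n$, $(t_i)\mapsto(t_i^{r_i})$, of part (2). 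Rigidifying by $\mu_{r,X}$ replaces $\Stab$ by $\Stab/\mu_{r,X}$, whose identity component now maps isomorphically to $\bG_m^n\times Y$; this splitting of the stabilizer extension yields a trivialization of the induced action, so the $\bG_m^n$-action on $Y:=X\rigidify\mu_{r,X}$ is trivial and $\cX\rigidify\mu_{r,\cX}\cong Y\times B\bG_m^n$. The factorization and its universal property in part (1) then follow from the universal properties of rigidification and of the coarse space, while the gerbe description $\cX\to Y$ and the contracted-product formula $\cX\cong p_1^*X\wedge^{\mu_r}p_2^*BT$ follow by comparing the two $\mu_r$-gerbe structures over $Y\times B\bG_m^n$ furnished by $X\to Y$ and by $BT\to B\bG_m^n$.

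The main obstacle I anticipate is the structural claim that the stabilizer is an extension of the torus by the inertia with central diagonalizable defect, carried out globally rather than merely fibrewise. Two points are genuinely delicate: centrality of $\mu_{r,X}$ in $I_X$ (a priori the finite group $I_X$ could act by conjugation on $\Stab^0$, and ruling this out uses that the trivializing isomorphisms $t\cdot x\cong x$ are canonical along the connected torus), and upgrading "trivial on $X_{\cms}$" to a genuine trivialization of the groupoid $\bG_m^n\times Y\rightrightarrows Y$ after rigidification. For both I expect to invoke the local structure theorem of the paper to reduce, \'etale-locally on $Y$, to an explicit quotient model $[\Spec A/\mu_r]$ with a diagonalizable action, where the torus grading of $A$ makes the decomposition, the centrality, and the triviality after rigidification completely explicit; the canonical nature of $\mu_{r,X}$ and of the factorization then guarantees that the local pictures glue, giving both the global statement and the asserted uniqueness.
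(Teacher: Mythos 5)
Your global framing via the stabilizer group scheme $\Stab := (\bG_m^n \times X)\times_{X\times X} X$ is a legitimate alternative organization to the paper's route (which reduces \'etale-locally to quotient stacks $[\Spec A/\Gamma_x]$ via the local structure theorem and then analyzes the extension $1 \to G \to \Gamma \to \bG_m^n \to 1$ group-theoretically before gluing), but the step on which your whole construction rests is not justified. You assert that because each translate $t\cdot x$ is isomorphic to $x$ pointwise, the projection $\Stab \to \bG_m^n\times X$ is an $I_X$-torsor, ``in particular faithfully flat,'' so that a relative identity component $\Stab^0$ exists and surjects onto $\bG_m^n$. Pointwise nonemptiness of the fibers of a finite unramified morphism does not give flatness, let alone a torsor structure (and ``torsor under $I_X$'' is already a delicate notion, since $I_X \to X$ is not flat where stabilizers jump). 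This is precisely where the reducedness hypothesis must do real work: in the paper's nonreduced counterexample $X=[\Spec(\bk[\epsilon]/\epsilon^2)/\mu_2]$, the action on $X_{\cms}$ is trivial and every geometric point satisfies $t\cdot x\cong x$, yet $\Stab\to\bG_m\times X$ is a non-flat closed immersion and the conclusion of the theorem fails. To obtain your exact sequence $1\to I_X\to\Stab\to\bG_m^n\times X\to 1$ you need the genuinely nontrivial fact that the action is trivial (not merely pointwise fixed) after a reparameterization $T\to\bG_m^n$; the paper obtains this from \cite[Proposition~5.32]{ahr} (see \Cref{rmk:triviality}) or, in the local model, from the reduced/quasi-finite/Nakayama argument of \Cref{prop:quotient-stack-case-alt}. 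Some such argument must be supplied; it cannot be read off from the definition of $\Stab$.

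The remaining delicate points you correctly flag---centrality of $\mu_{r,X}$ in all of $I_X$ (not just in the commutative $\Stab^0$), and upgrading the splitting of the stabilizer extension to the identification $\cX\rigidify\mu_{r,\cX}\cong Y\times B\bG_m^n$---are deferred to an \'etale-local quotient model, but that model should be $[\Spec A/\Gamma_x]$ with $\Gamma_x$ the full extension of $\bG_m^n$ by the finite stabilizer, not $[\Spec A/\mu_r]$; and once you are in that model you are carrying out exactly the paper's argument (\Cref{lem:central}, \Cref{lem:split}, \Cref{prop:quotient-stack-case-alt}), so the global stabilizer package does not actually save you the local analysis. Finally, the universal property in part (1) is not addressed: one still needs the argument of \Cref{prop:universal-properties}, namely that $\mu_r\subset\Stab_{\cX}(x)^0$ must map trivially to the finite group $\Stab_{Y'}(y')$.
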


\subsection{Special cases and our proof strategy}

Given the technical nature of the statement and the subtleties arising from torus actions on Deligne--Mumford stack, we will try to unpack the conclusion in a series of remarks and examples.
\begin{remark}[Torus equivariant interpretation]
    Part \eqref{thm:main1} asserts that $Y$ is the largest Deligne--Mumford stack with trivial $\bG_m^n$-action factoring $\bG_m^n$-equivariantly as $X \to Y =X\rigidify \mu_{r,X} \to X_{\cms}$.  Note that there is an identification $X_{\cms} \cong Y_{\cms}$ of coarse moduli spaces. 
    Note also that the assumption that the $\bG_m^n$-action on $X_{\cms}$ is trivial translates to the condition that the relative coarse moduli space of $\cX \to B \bG_m^n$ is $X_{\cms} \times B\bG_m^n$.

    For part \eqref{thm:main2}, let $X_{\triv}$ be the $\mu_r$-gerbe over $Y$ which is isomorphic to $X$ as Deligne--Mumford stacks over $Y$ but is given the trivial $\bG_m^n$-action.  Then the $\bG_m^n$-equivariant $\mu_r$-gerbe $X \to Y$ is obtained from the possibly \emph{non-trivial} $\mu_r$-gerbe $X_{\triv} \to Y$ with \emph{trivial} $\bG_m^n$-action
    by twisting by the \emph{trivial} $\mu_r$-gerbe $B\mu_{r,Y} \to Y$ with \emph{non-trivial} $\bG_m^n$-action, i.e. there is a $\bG_m^n$-equivariant isomorphism of $\mu_r$-gerbes
    $$X \cong X_{\triv} \wedge^{\mu_r} B\mu_{r,Y}.$$
    While the $\bG_m^n$-action on $X$ may be non-trivial, part \eqref{thm:main2} also asserts that there is a smallest integer $r$ such that the action becomes trivial after the reparameterization $T=\bG_m^n \to \bG_m^n$ defined by $(t_1, \ldots, t_n) \mapsto (t_1^{r_1}, \ldots, t_n^{r_n})$.  
\end{remark}

\begin{example}[Case of a classifying stack]
  \label{ex:classifying-stack}
  The crucial example---both to understand the conclusion of the main theorem and its proof---is a $\bG_m^n$-action on a classifying stack $X=BG$ of a finite group $G$.  Such an action corresponds to an extension 
  \begin{equation} \label{eq:extension}
    1 \to G \to \Gamma \to \bG_m^n \to 1
  \end{equation}
  of algebraic groups (see \Cref{ex:action-on-BG}) where $\cX = [X/\bG_m^n] = B \Gamma$. 
  We let $T = \Gamma^0 \subset \Gamma$ be the identity component.   As $T$ is isomorphic to $\bG_m^n$ (see \S\ref{ss:extension}), the intersection $T \cap G$ is isomorphic to $\mu_r$ for a unique tuple of integers $r=(r_1, \ldots, r_n)$ with $r_1 | \cdots | r_n$. 
  By \Cref{lem:split,lem:central}, $T$ and $\mu_r$ are central in $\Gamma$, and there is an identification $\bar{\Gamma} \cong \bar{G} \times \bG_m^n$ where  $\bar{G} := G / \mu_r$ and $\bar{\Gamma} := \Gamma / \mu_r$.    Part \eqref{thm:main1} of \Cref{thm:main} is the identifications 
  $$\cX \rigidify \mu_r \cong B \bar{\Gamma} \cong \underbrace{B\bar{G} \times B\bG_m^n}_{Y \times B \bG_m^n}$$ 
  over $B \bG_m^n$ while \eqref{thm:main2} is the factorization
  $$\underbrace{BG  \times BT}_{X \times BT} 
    \to \underbrace{B \Gamma}_{\cX} 
    \to \underbrace{B \bar{\Gamma}}_{B \bar{G} \times B \bG_m^n}.
  $$
  and \eqref{thm:main3} is the factorization 
  $$
    \underbrace{B \Gamma}_{\cX} \to 
      \underbrace{B \bar{\Gamma}}_{\cX \rigidify \mu_r} \to  
      \underbrace{B \bar{G}}_{\cX \rigidify T}.
  $$
\end{example}

\begin{example}[Case of a quotient stack]
  \label{ex:quotient-stack}
  Let $G$ be a finite group acting on a connected, reduced, and separated algebraic space $U$ of finite type over $\base$.  Suppose that $X:=[U/G] \to BG$ is a $\bG_m^n$-equivariant morphism such that the induced $\bG_m^n$-action on the quotient algebraic space $U/G = [U/G]_{\cms}$ is trivial.  Then $B\Gamma = [BG/\bG_m^n]$ for an extension $\Gamma$ of $\bG_m^n$ by a finite group $G$ as in \eqref{eq:extension}.  We set $\cX = [U/\Gamma]$ and we define $T=\Gamma^0$, $\mu_r = G \cap T$, $\bar{G} = G/\mu_r$, and $\bar{\Gamma} = \Gamma/\mu_r$ as above. 

  Since $U$ is reduced and $U \to U/G$ is quasi-finite and equivariant under $T \to \bG_m^n$, the action of $T$ (and thus $\mu_r$) on $U$ must be trivial.  This gives a central subgroup $\mu_{r,U}$ of the stabilizer group scheme of the action of $G$ on $U$ and thus a central subgroup $\mu_{r,\cX} \subset I_{\cX/B\bG_m^n}$ of the relative inertia.    
  Then \eqref{thm:main1}-\eqref{thm:main3}  of \Cref{thm:main} are expressed with the factorization 
  $$\underbrace{ [U/G] \times BT \cong [U/ (G \times T)]}_{X \times B T} 
    \to \underbrace{[U / \Gamma]}_{\cX} 
    \to \underbrace{[U/ \bar{\Gamma}] \cong [U/\bar{G}] \times B \bG_m^n}_{\cX\rigidify \mu_r} 
    \to \underbrace{[U/\bar{G}]}_{\cX\rigidify T}.$$
\end{example}

\begin{remark}[Root gerbes]
  \label{rmk:root-gerbes}
  In the case of a $\bG_m$-action, the $\mu_r$-gerbe $X \to Y$ will
  sometimes be a root gerbe and correspondingly so will
  $\cX \to Y \times B \bG_m$.
  This occurs for instance in \Cref{ss:moduli-example1} below.
  Recall that the $r$th root gerbe $\sqrt[r]{(X,L)}$ of a line bundle
  $L$ on $X$ is defined as the fiber product of the map
  $X \to B \bG_m$ classifying $L$ and the $r$th power map
  $B \bG_m \to B \bG_m$. 
  In fact, $X \to Y$ is a root gerbe if and only if its cohomology
  class in $H^2(Y_{\rm et}, \mu_r)$ is the image of a line bundle
  $[L] \in H^1(Y_{\rm et}, \bG_m)$ under the boundary map induced from the
  Kummer sequence $1 \to \mu_r \to \bG_m \to \bG_m \to 1$.
  If $X \to Y$ is a root gerbe corresponding to a line bundle $L$ on
  $Y$, then the root gerbe corresponding to $\cX \to Y \times B \bG_m$
  is $\cX \cong \sqrt[r]{(Y \times B\bG_m, L \boxtimes [1])}$ where
  $[1]$ is the weight-$1$ representation of $\bG_m$.
\end{remark}

\begin{remark}
  \label{rmk:moduli}
  Torus actions and fixed loci are ubiquitous in moduli theory.
  In fact, localization with respect to a torus action on the moduli
  space of stable maps as in \cite{GrPa99} is one of the main
  techniques in Gromov--Witten theory, and \Cref{thm:main} describes
  the equivariant geometry of the fixed loci in detail.
  In \Cref{ss:moduli-examples}, we give two examples illustrating
  \Cref{thm:main} for moduli spaces of stable maps.
\end{remark}

\begin{remark}[Strategy of proof in the general case]
    Our strategy to handle the general case of an algebraic stack $\cX=[X/\bG_m^n]$ over $B \bG_m^n$ is to reduce to the case where $X=[U/G]$ is a quotient by a finite group.  To this end, 
    we prove a general \'etale local structure theorem of morphisms $\cX \to \cY$ (see \Cref{thm:local-structure-morphisms}).  When $\cY = B \bG_m^n$, the theorem implies that any point of $X$ has a $\bG_m^n$-equivariant \'etale neighborhood 
    $W=[\Spec A/G] \to X$ or in other words an \'etale neighborhood $\cW=[W/\bG_m^n] \to \cX$ over $B\bG_m^n$. 
    We then show that the construction of the subgroup $\mu_{r, \cW} \subset I_{\cW/B\bG_m^n}$ descends a central subgroup $\mu_{r, \cX} \subset I_{\cX/B\bG_m^n}$ and that the factorizations in parts \eqref{thm:main1}-\eqref{thm:main3} can be checked \'etale locally.
\end{remark}

\begin{remark}[Necessity of hypotheses]
  The connectedness assumption in \Cref{thm:main} is not essential.
  If $X$ is disconnected, then there will still be a central subgroup
  of $I_{\cX/\bG_m}$ with the desired properties. 
  On each connected component, this subgroup will be isomorphic to
  $\mu_r$ for some tuple of integers $r$, but these integers will
  depend on the connected component. 
  
  On the other hand, the conclusion of \Cref{thm:main} may fail if
  $\cX$ is nonreduced. 
  Consider the affine scheme $U = \Spec(\bk[\epsilon]/\epsilon^2)$,
  and consider the action of $\Gamma = \Gm$ on $U$ defined by
  $\epsilon \mapsto t\epsilon$.
  The quotient stack $X =[U/\mu_2]$ by the subgroup
  $\mu_2 \subset \Gamma$ is a nonreduced, connected Deligne--Mumford
  stack with coarse moduli $U/\mu_2 = \Spec \bk$. 
  There is an induced action of $\bG_m = \Gamma/\mu_2$ on $X$, and the
  quotient $\cX = [X/\bG_m] = [U/\Gamma]$ is a nonreduced and
  connected algebraic stack over $B\bG_m$ via the structure morphism
  $\cX \to B\bG_m$ induced by the squaring map $\Gamma \to \bG_m$.  
  Note that $X \to \Spec \bk$ and $\cX \to B \bG_m$ are not
  $\mu_2$-gerbes.

  On the other hand, the theorem does extend to the nonreduced case if
  we modify the hypotheses:
\end{remark}

\begin{theorem}
  \label{thm:main-alt}
  Let $\cX = [X/\bG_m^n] \to B \bG_m^n$ be a quasi-separated, finite
  type, and relatively Deligne--Mumford morphism of algebraic stacks.
  Assume that the $\bG_m^n$-action on $\cX$ is trivial after a
  reparameterization of $\bG_m^n$.
  Then the conclusion of \Cref{thm:main} holds.
\end{theorem}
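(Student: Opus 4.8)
The plan is to deduce Theorem~\ref{thm:main-alt} from the proof of Theorem~\ref{thm:main} by pinpointing the single role played by the reducedness hypothesis and replacing it with the new assumption. Following Examples~\ref{ex:classifying-stack} and~\ref{ex:quotient-stack}, the proof of Theorem~\ref{thm:main} reduces \'etale-locally to a quotient $\cX=[U/\Gamma]$ with $U=\Spec A$ for an extension $1\to G\to\Gamma\to\bG_m^n\to1$ as in \eqref{eq:extension}, with $X=[U/G]$, and then sets $T=\Gamma^0$, $\mu_r=T\cap G$, $\bar{G}=G/\mu_r$, $\bar{\Gamma}=\Gamma/\mu_r$. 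In that argument reducedness is invoked in exactly one step, namely the assertion in Example~\ref{ex:quotient-stack} that (since $U$ is reduced and $U\to U/G$ is quasi-finite and $T\to\bG_m^n$-equivariant) the identity component $T=\Gamma^0$ acts \emph{trivially} on $U$; equivalently, the $\bG_m^n$-action becomes trivial after the isogeny $T=\bG_m^n\to\bG_m^n$. As this triviality is precisely the hypothesis of Theorem~\ref{thm:main-alt}, the first step is to verify that it is the only consequence of reducedness used, so that the remainder of the argument applies verbatim.

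Granting this, I would run the construction as follows. The reparameterization hypothesis provides an isogeny $\phi\co T=\bG_m^n\to\bG_m^n$ trivializing the action; after Smith normal form $\phi$ is $(t_1,\dots,t_n)\mapsto(t_1^{r_1},\dots,t_n^{r_n})$ with $r_1\mid\cdots\mid r_n$, and $\mu_r=\ker\phi$, where the invariant factors make $r$ canonical. \'Etale-locally $\phi$ is identified with the isogeny $\Gamma^0\to\bG_m^n$, and the hypothesis says $T=\Gamma^0$ acts trivially on $U$; hence so does $\mu_r=T\cap G$. This trivial action yields a central subgroup of the stabilizer group scheme, producing the central closed \emph{fppf} subgroups $\mu_{r,X}\subset I_X$, $\mu_{r,\cX}\subset I_{\cX/B\bG_m^n}$ and $T\subset I_\cX$. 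Applying \Cref{lem:split,lem:central} (which are statements about the extension $\Gamma$ and do not use reducedness) gives that $T$ and $\mu_r$ are central in $\Gamma$ and that $\bar{\Gamma}\cong\bar{G}\times\bG_m^n$. Setting $Y=X\rigidify\mu_{r,X}$ and reading off the rigidifications exactly as in Examples~\ref{ex:classifying-stack} and~\ref{ex:quotient-stack} then yields parts~\eqref{thm:main1}--\eqref{thm:main3} in the local model, including the $\mu_r$-gerbe $X\times BT\to\cX$ obtained by base change along the $\mu_r$-gerbe $BT\to B\bG_m^n$ (here $\cX\times_{B\bG_m^n}BT=[U/T]=X\times BT$ by the triviality of the reparameterized action).

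To globalize I would descend these data along the \'etale cover supplied by the local structure theorem (\Cref{thm:local-structure-morphisms}), which applies to any quasi-separated, finite type, relatively Deligne--Mumford morphism and in particular requires no tameness. Descent is effective because $\mu_{r,\cX}$ and $T\subset I_\cX$ are \emph{central}, hence canonically determined and compatible on overlaps, and the universal property in part~\eqref{thm:main1} characterizes $Y$ uniquely up to unique isomorphism; this is what allows connectedness to be dropped, at the cost of letting the tuple $r$ vary from component to component as in the Necessity remark. Since every construction used---base change of gerbes, rigidification by a central flat subgroup, and the associated-gerbe construction for the inclusion $\mu_r\subset T$---makes sense for fppf group schemes, replacing ``tame'' by ``Deligne--Mumford'' is harmless provided one systematically allows $\mu_r$ (and the band) to be non-\'etale when $\operatorname{char}\bk\mid r$, exactly as the phrase ``central closed fppf subgroup'' in part~\eqref{thm:main3} anticipates.

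The main obstacle I expect is not this gerbe bookkeeping but the foundational inputs that separatedness and tameness supplied silently in Theorem~\ref{thm:main}. With only quasi-separatedness the inertia of $X$ need not be proper, so the coarse space $X_{\cms}$ and the relative coarse moduli space of $\cX\to B\bG_m^n$ appearing in part~\eqref{thm:main1} are not automatic; one must either verify that the reparameterization hypothesis forces the relevant inertia to be finite so that Keel--Mori applies, or reformulate part~\eqref{thm:main1} through the rigidifications $Y=X\rigidify\mu_{r,X}$ and $\cX\rigidify\mu_{r,\cX}$, which exist unconditionally and carry the substance of the statement. One must also confirm that the \'etale presentations of \Cref{thm:local-structure-morphisms} can be taken $\bG_m^n$-equivariantly in this weaker setting. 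Establishing these finiteness and equivariance statements, rather than the formal manipulation of the subgroup $\mu_r$, is where the real work of Theorem~\ref{thm:main-alt} lies.
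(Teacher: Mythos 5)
Your overall strategy---isolate the single use of reducedness, replace it by the reparameterization hypothesis, and globalize by \'etale descent---is the same as the paper's, which proves the quotient case in \Cref{prop:quotient-stack-case-alt} and then descends exactly as in the proof of \Cref{thm:main}. But there is a genuine gap at the central step. You write that ``the hypothesis says $T=\Gamma^0$ acts trivially on $U$.'' It does not: the hypothesis is that the reparameterized action is trivial on the \emph{stack} $X=[U/G]$ (equivalently on $\cX$ over $BT'$), not on the finite \'etale atlas $U$. Triviality of a torus action on $[U/G]$ does not formally imply triviality on $U$, and this implication is precisely where the paper has to work: it decomposes $\mathcal{A}=p_*\oh_U$ into weight spaces for the $T'$-action, observes that the nonzero-weight part $\mathcal{A}'$ is contained in $I\mathcal{A}$ where $I$ is the nilradical of $\oh_X$ (because the $T'$-action on $U_{\rm red}$ is trivial by the reduced-case argument), deduces $\mathcal{A}'\subset I\mathcal{A}'$ by comparing weights, and then kills $\mathcal{A}'$ by Nakayama using coherence of $p_*\oh_U$. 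The nonreduced example $U=\Spec(\bk[\epsilon]/\epsilon^2)$ in the paper shows some such argument is unavoidable: there the torus acts nontrivially on $U$ even though it acts trivially on the coarse space, and the entire content of passing from \Cref{thm:main} to \Cref{thm:main-alt} is that the strengthened hypothesis (triviality on $X$ itself, not merely on $X_{\cms}$) is exactly what makes this descent of triviality from $X$ to $U$ go through. Your proposal treats this as a restatement of the hypothesis rather than as something to be proved.

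Two smaller points. First, the tuple $r$ in the conclusion is determined by the minimal isogeny $\Gamma^0\to\bG_m^n$, not by an arbitrary trivializing reparameterization $T'\to\bG_m^n$ (which could be any multiple of it); one must first reduce to the case where $T'\to\bG_m^n$ factors through $\Gamma^0$, as the paper does, before identifying $\mu_r$ with $\ker(\Gamma^0\to\bG_m^n)=G\cap\Gamma^0$. Second, your closing worries about the existence of coarse spaces and of equivariant presentations in the quasi-separated, non-tame setting are reasonable things to flag, but they are not where the missing content of the proof lies; the Nakayama step above is.
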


\begin{remark}
  In fact, as we will see in \Cref{rmk:triviality} below, when $\cX$
  is separated and reduced, a torus action trivial on coarse moduli is
  automatically also trivial after a reparameterization, and so
  \Cref{thm:main-alt} generalizes \Cref{thm:main}.
\end{remark}

\begin{remark}[Restriction to stabilizers]
  In the setting of \Cref{thm:main}, for every point $x \in X(\bk)$, there is a short exact sequence $1 \to \Stab_X(x) \to \Stab_{\cX}(x) \to \bG_m^n \to 1$.  The central subgroup $\mu_{r,\cX} \subset I_{\cX/\bG_m^n}$ restricts to the subgroup $\mu_r = \Stab_{\cX}(x)^0 \cap \Stab_X(x) \subset \Stab_X(x)$.
\end{remark}

\begin{remark}
  This paper was inspired and strengthens the second author's work
  \cite{CJR22P} (joint with Q.~Chen and Y.~Ruan) on punctured R-maps
  in the context of gauged linear sigma models (GLSM).
  In fact, as explained in Section~\ref{ss:glsm} below $\bG_m$-actions
  play a central role in the geometry of GLSM.
  In particular, the notation for the groups in the sequence
  \eqref{eq:extension} follows the one used in the work of
  Polishchuk--Vaintrob \cite[\S~3.2]{PoVa16}, and Fan--Jarvis--Ruan
  \cite[\S~2]{FJR18}.
\end{remark}

As $\mu$-gerbes play a central role in the structure of torus fixed loci of Deligne--Mumford stacks, it is interesting to study the behavior of invariants such as Picard and Chow groups under taking $\mu$-gerbes.
We hope to explore this in future work.

It would also be interesting to study the following generalizations.
Are there analogous results for (tame) Artin stacks with a torus action or Deligne--Mumford stacks with a non-abelian action?
Is there an analogous description for the attractor locus of a $\bG_m$-action?

\subsection{A local structure theorem for morphisms}

Our proof of \Cref{thm:main} uses a special case of the following theorem, which we believe is independently interesting.   It is established using techniques from \cite{ahr} and \cite{ahr2}.

\begin{theorem} \label{thm:local-structure-morphisms}
  \label{thm:luna-etale-slice-theorem-for-morphisms}
  Let $f \co \cX \to \cY$ be a morphism of quasi-separated algebraic stacks locally of finite type over $\bk$ with affine stabilizers.  Let $x \in \cX(\bk)$ and $y=f(x) \in \cY(\bk)$ be its image.  Assume that the stabilizers $G_x$ and $G_y$ are
  linearly reductive.  Then there exist
  \begin{enumerate}[(1)]
    \item affine schemes $\Spec A$ and $\Spec B$ with actions of $G_x$ and $G_y$, respectively;
    \item $\bk$-points $x' \in \Spec A$ and $y' \in \Spec B$; 
    \item a morphism $g \co \Spec A \to \Spec B$ of affine schemes equivariant with respect to $G_x \to G_y$; and 
    \item \'etale morphisms $([\Spec A / G_x], x') \to (\cX,x)$ and  $([\Spec B / G_y], y') \to (\cY,y)$ inducing isomorphisms of stabilizer groups at $x'$ and $y'$
  \end{enumerate}
  such that the diagram
  $$\xymatrix{
    BG_x \ar[d]
      & [\Spec A/G_x] \ar[r] \ar[d]^g \ar[l]
      & \cX \ar[d]^f \\
    BG_y
      & [\Spec B/G_y] \ar[r]  \ar[l]
      & \cY
  }$$
  is commutative.
  Moreover, if $\cX$ and $\cY$ have separated diagonal (resp. affine diagonal), then the morphisms $[\Spec A/G_x] \to \cX$ and $[\Spec B/G_y] \to \cY$ can be arranged to be representable (resp. affine).
\end{theorem}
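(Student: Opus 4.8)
The plan is to reduce to the absolute Luna étale slice theorem of \cite{ahr}, applied first to $\cY$ and then to a fiber product, and to upgrade the resulting morphism of quotient stacks to an equivariant morphism of affine schemes using the deformation-theoretic techniques of \cite{ahr, ahr2}. First I would apply the absolute theorem to $(\cY, y)$: since $G_y$ is linearly reductive, there is an affine $G_y$-scheme $\Spec B$, a point $y' \in \Spec B(\bk)$, and an étale morphism $([\Spec B/G_y], y') \to (\cY, y)$ inducing an isomorphism of stabilizers at $y'$. The absolute theorem moreover arranges $y'$ to be a $G_y$-\emph{fixed} point of $\Spec B$ (equivalently $\Stab_{G_y}(y') = G_y$); this fixed point is the key to the compatibility argument below. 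I then form the fiber product $\cX_B := \cX \times_\cY [\Spec B/G_y]$, with its projections $q \co \cX_B \to \cX$ (étale, being the base change of an étale morphism) and $p \co \cX_B \to [\Spec B/G_y]$ (the base change of $f$). The pair $(x, y')$ determines a point $\tilde x \in \cX_B(\bk)$ over $x$, and since the slice induces an isomorphism of stabilizers one computes $\Stab_{\cX_B}(\tilde x) \cong G_x \times_{G_y} G_y \cong G_x$. Applying the absolute theorem again to $(\cX_B, \tilde x)$ produces an affine $G_x$-scheme $\Spec A_0$, a $G_x$-fixed point $x' \in \Spec A_0(\bk)$, and an étale morphism $([\Spec A_0/G_x], x') \to (\cX_B, \tilde x)$ inducing an isomorphism of stabilizers. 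Composing with $q$ gives the étale neighborhood $([\Spec A_0/G_x], x') \to (\cX, x)$, still inducing an isomorphism of stabilizers, and composing with $p$ gives a morphism $h \co [\Spec A_0/G_x] \to [\Spec B/G_y]$ lying over the induced homomorphism $\phi \co G_x \to G_y$ on stabilizers.

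It remains to realize $h$ by a $G_x$-equivariant morphism $g \co \Spec A \to \Spec B$ (with $G_x$ acting on $\Spec B$ through $\phi$) on a possibly smaller $G_x$-invariant affine neighborhood $\Spec A \subseteq \Spec A_0$ of $x'$. Unwinding definitions, the composite $\Spec A_0 \to [\Spec A_0/G_x] \to [\Spec B/G_y]$ is classified by a $G_y$-torsor $\tilde P \to \Spec A_0$ carrying a compatible $G_x$-action (twisted by $\phi$) lifting the one on $\Spec A_0$, together with a $G_y$-equivariant map $\tilde P \to \Spec B$ compatible with the $G_x$-actions. Producing $g$ is exactly producing a $G_x$-equivariant trivialization of $\tilde P$; restricting the resulting map $\tilde P \to \Spec B$ to the identity section then yields $g$, and $[g/G_x]$ recovers $h$ through the factorization $[\Spec B/G_x] \to [\Spec B/G_y]$. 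Here the $G_y$-fixed point $y'$ pays off: the point of the fiber $\tilde P|_{x'}$ determined by $y' \in \Spec B$ is fixed by $G_x$ (as $\phi(G_x)$ fixes $y'$), so $\tilde P$ is already $G_x$-equivariantly trivial over the residual gerbe $BG_x$ at $x'$.

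The main obstacle is the final extension step: promoting this $G_x$-equivariant trivialization over $BG_x \hookrightarrow [\Spec A_0/G_x]$ to one over an actual neighborhood of $x'$. I would handle this with the deformation theory of \cite{ahr, ahr2}. Descending $\tilde P$ to a $G_y$-torsor $P \to [\Spec A_0/G_x]$, the task is to extend the section of the affine morphism $P \to [\Spec A_0/G_x]$ from the residual gerbe to a neighborhood. Passing to the completion of the good moduli space $\Spec A_0^{G_x}$ at the image of $x'$ and invoking the coherent completeness results of \cite{ahr}, one extends the section order by order over the formal neighborhood of the central fiber, the successive obstructions and ambiguities lying in $G_x$-invariant coherent cohomology groups that are controlled precisely because $G_x$ is linearly reductive. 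Equivariant Artin algebraization \cite{ahr2} then converts the formal section into an algebraic one over an étale (indeed, after shrinking, affine) neighborhood, yielding $g$ and the asserted commutative diagram.

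Finally, the refinement that the slices can be taken representable (resp.\ affine) under the separated (resp.\ affine) diagonal hypotheses follows from the corresponding refinement in the absolute theorem of \cite{ahr}: these diagonal conditions on $\cX$ and $\cY$ make $[\Spec B/G_y] \to \cY$ representable (resp.\ affine), hence also $q \co \cX_B \to \cX$, so that $\cX_B$ inherits the diagonal hypothesis and the slice $[\Spec A_0/G_x] \to \cX_B$ may likewise be taken representable (resp.\ affine); composing with $q$ gives the claim for $[\Spec A_0/G_x] \to \cX$.
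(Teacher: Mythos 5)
Your proposal follows the paper's proof essentially verbatim: apply the absolute local structure theorem of \cite{ahr} first to $(\cY,y)$ and then to the fiber product $\cX \times_{\cY} [\Spec B/G_y]$, and then extend the isomorphism between the two $G_y$-torsors over $[\Spec A/G_x]$ (your equivariant trivialization of $\tilde P$ is the same datum) from the residual gerbe of $x'$ to a neighborhood using that the relevant Isom-stack is smooth and affine --- the paper simply cites \cite[Prop.~7.18]{ahr2} for this extension step where you sketch its deformation-theoretic proof, and handles the representable/affine addendum the same way.
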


A variant of this theorem was recently established in \cite[Thm.~2.1]{dCHN}.

\subsection*{Acknowledgements}

We thank Martin Bishop, Qile Chen, Giovanni Inchiostro, Max Lieblich, Rahul
Pandharipande, and Minseon Shin for useful discussions related to this work.

The first author was partially supported by NSF grant DMS-2100088. 
The second author was partially supported by NSF grants DMS-2239320,
DMS-2054830 and DMS-1638352.

\section{Examples and Application to gauged linear sigma models}

In this section, we further illustrate \Cref{thm:main} in examples in
moduli theory, and we explain an application to the geometry of gauged
linear sigma models.

\subsection{Examples in moduli theory}
\label{ss:moduli-examples}

\subsubsection{Stable maps to a Hirzebruch surface}
\label{ss:moduli-example1}

Let $S = \PP_{\PP^1}(\oh(n) \oplus \oh)$ be a Hirzebruch surface.
We may then consider the Deligne--Mumford stack
$M = \overline{\mathcal M}_{0, 0}(S, rF)$ of genus zero, unpointed
stable maps to $S$ whose curve class is $r$ times the fiber class $F$.
Equip $S$ with a $\bG_m$-action that scales the fibers, such that
$[S / \bG_m] \cong \PP_{\PP^1 \times B\bG_m}((\oh(n) \boxtimes
[1])\oplus \oh)$, where $[1]$ is the weight one representation of
$\bG_m$.
By post-composition, this equips $M$ with a $\bG_m$-action.

One of the fixed loci of this $\bG_m$-action corresponds to the closed
substack $X$ of ``Galois covers'' $C \to S$ that are $r$-fold covers
of a fiber of $S \to \PP^1$ totally ramified at two points in $C$.
We may identify $X$ with the root gerbe $\sqrt[r]{(\PP^1, \oh(n))}$,
where the associated map $X \to \PP^1$ records the image of $C$ in
$\PP^1$, and the universal $r$th root corresponds to the normal bundle
of $C$ at one of the ramification points.

The quotient $\cX = [X/\bG_m]$ may be identified with
$\sqrt[r]{(\PP^1 \times B\bG_m, \oh(n) \boxtimes [1])}$.
This illustrates \Cref{thm:main} \eqref{thm:main1}, \eqref{thm:main2}
with $Y = \PP^1$.

\subsubsection{Stable maps to projective space}

Consider the $\bG_m$-action on $\PP^N$ given by distinct weights $\lambda_0, \ldots, \lambda_N$, and the induced $\bG_m$-action on $\bar{\cM}_{g,n}(\bP^N, d)$. 
We illustrate \Cref{thm:main} for each connected component of the fixed locus $\bar{\cM}_{g,n}(\bP^N, d)^{\bG_m}$.  Following \cite[\S 4]{GrPa99}, each connected component corresponds to a marked graph $\Gamma$ as follows.  If $f \colon C \to \bP^N$ is a stable map fixed by $\bG_m$, then $f(C)$ is a $\bG_m$-invariant curve, and the image of every marked point, node, contracted component, and ramification point is a $\bG_m$-fixed point of $\bP^N$.  Since the weights are distinct, the $\bG_m$-fixed points of $\bP^N$ are the points $p_0, \ldots, p_N$ corresponding to the standard basis elements and the $\bG_m$-invariant curves are the lines connecting $p_i$ to $p_j$.  Each non-contracted component maps to one of these $\bG_m$-invariant curves and is ramified over two points, which implies that each such component is rational and that the map $f$ restricted to this component is determined by its degree. The graph $\Gamma$ corresponding to $f$ has vertices corresponding to the connected components of $f^{-1}(\{p_0, \ldots, p_N\})$ and has edges corresponding to the non-contracted component.  An edge $e$ is incident to a vertex $v$ if the corresponding components of $C$ intersect.  In addition, each edge $e$ is marked with the degree $d_e$ of the map from the non-contracted component to the line in $\bP^N$, and each vertex $v$ is marked with the arithmetic genus $g(v)$ of the corresponding component with the convention that $g(v) = 0$ if the component is a single point.  
There is also a labeling map $i \colon \rm{Vertices} \to \{0, \ldots, N\}$ defined by $f(v) = p_{i(v)}$.  
Finally, there are $n$ numbered legs corresponding to the $n$ marked points, with a leg attached to corresponding vertex.
    
The connected component of $\bar{\cM}_{g,n}(\bP^N, d)^{\bG_m}$ containing $f$ is isomorphic to 
$$\left[\bigg(\prod_v \bar{\cM}_{g(v), \rm{val}(v)} \bigg) / A\right],$$
where $A$ is a split extension
$$1 \to \prod_e \bZ/d_e \to A \to \Aut(\Gamma) \to 1.$$

We now determine the unique integer $r$ arising from \Cref{thm:main} for the connected component containing $f$.  For each edge $e$ connecting vertices $v_1$ and $v_2$, let $\lambda_{e,1}=\lambda_{i(v_1)}$ and $\lambda_{e,2}=\lambda_{i(v_2)}$.  We claim that 
\begin{equation} \label{eqn:r}
  r = \operatorname{lcm} \left\{ \frac{d_e}{\gcd(d_e, \lambda_{e,1}-\lambda_{e,2})} \ \bigg|\ e \text{ edge of } \Gamma \right\}.
\end{equation}
    
To see this, we consider the case of a single edge as the general case is similar and only notationally more complicated. Let $f \colon C \to \bP^N$ be a stable map with a single component $C_e$ that is not contracted and that maps $d$ to $1$ to the line connecting $p_i$ to $p_j$.  We may assume that the other pointed components of $C$ have no automorphisms.  
The stabilizer of $f$ in $[\bar{\cM}_{g,n}(\bP^N, d) / \bG_m]$ is
$$\mathrm{St}_f = \{(s,t) \, \mid \, s^d = t^{\lambda_{e,1} - \lambda_{e,2}} \} \subset \Aut(C) \times \bG_m,$$ 
where $s$ and $t$ are the coordinate of the $\bG_m$'s corresponding to the
automorphism group of the two-pointed $C_e$ and the two-pointed line in 
$\bP^N$.  Letting $h = \gcd(d, \lambda_{e,1} - \lambda_{e,2})$, the connected
component $\mathrm{St}^0_f$ of $\mathrm{St}_f$ is given by the vanishing of 
$s^{d/h} - t^{ (\lambda_{e,1} - \lambda_{e,2}) / h}$.  
The map $\mathrm{St}_f^{0} \to \bG_m$ is therefore $d/h$ to $1$, and thus $r = d/h$, which agrees with \eqref{eqn:r}.

\subsection{Relationship to Gauged Linear Sigma Models}
\label{ss:glsm}

Gauged linear sigma models are an important class of models for string
theory, initially considered by Witten \cite{Wi93}.
The theory has many aspects such as that it involves a reductive
``gauge'' group $G$ acting linearly on a vector space $V$, and a
``phase'', which corresponds to the choice of a GIT chamber for the
quotient $[V / G]$.
Most importantly for us, the theory has an additional ``R-symmetry''
corresponding to a $\bG_m$-action on $[V / G]$.

In \cite{FJR18}, a mathematical theory of enumerative invariants of
GLSMs has been constructed, involving a proper moduli stack and
virtual cycle.
Many recent advances in the mathematics of the higher genus Gromov--Witten theory
of complete intersections are based on gauged linear sigma models, see
for instance \cite{GJR18P} and \cite{CGL21}.
  
The geometry of the target of GLSM is a Deligne--Mumford stack
$\fP^\circ_\bk$ together with an action of $\bG_m$.
In the above setup, $\fP^\circ_\bk = [V / G]$; however, as explained
in \cite{CJR21}, it is possible to develop much of the theory for
Deligne--Mumford stacks $\fP^\circ_\bk$, which are not necessarily
quotients of this type.
Given this, the moduli objects considered in GLSM are, roughly
speaking, pre-stable curves $C$ together with a map%
\footnote{This assumes the $\epsilon = \infty$-stability condition --
  in general they may be quasi-maps.}
to $\fP^\circ_\bk$ twisted by the logarithmic dualizing line bundle
$\omega_C^{\log}$ according to the $\bG_m$-action.
One way to make this precise is using the notion of R-maps of
\cite{CJR21}: An R-map is a morphism
$f\colon C \to \fP^\circ := [\fP^\circ_\bk/\bG_m]$ together with a
2-commutative triangle
\begin{equation*}
  \xymatrix{
    & \fP^\circ \ar[d] \\
    C \ar[ur]^f \ar[r]^{\omega_C^{\log}} & B\bG_m,
  }
\end{equation*}
in which the lower horizontal arrow is given by the $\bG_m$-torsor
corresponding to $\omega_C^{\log}$.
The target $\fP^\circ_\bk$ of a GLSM is usually not proper, hence so
is the moduli space, and this causes technical difficulties in the
theory.
This non-properness problem was resolved in \cite{CJR21} via a
logarithmic compactification of the GLSM moduli space.
This involves choosing a $\bG_m$-equivariant compactification
$\fP_\bk$ of $\fP_\bk^\circ$ such that
$\fP_\bk \setminus \fP^\circ_\bk$ is a smooth, connected Cartier
divisor $\infty_\bk$ that is fixed under the $\bG_m$-action.
Furthermore, $\fP_\bk$ is equipped with a divisorial log structure
along $\infty_\bk$.

As an important part of understanding the structure of logarithmic
compactifications of GLSM moduli spaces, \cite{CJR22P} studies
punctured R-maps, which are analogous to the punctured maps of
Abramovich--Chen--Gross--Siebert \cite{ACGS20P}, to the boundary
divisor $\infty_\bk$.
In general, the stack underlying $\infty_\bk$ is a smooth
Deligne--Mumford stack with a $\bG_m$-action whose coarse moduli space
is projective and with trivial $\bG_m$-action.
In particular, $\infty_\bk$ satisfies the assumptions of
\Cref{thm:main}.
The logarithmic structure of $\infty_\bk$ is determined by a single
$\bG_m$-equivariant line bundle $\oh(\infty_\bk)$ on $\infty_\bk$.

This work allows removing an additional assumption on the target
$\infty_\bk$ as required in \cite{CJR22P}.
More specifically, the results in \cite[\S7,8]{CJR22P} related to the
behavior of punctured R-maps under forgetting a marking assume that
$\infty_\bk$ has the structure of a root gerbe as in
\Cref{rmk:root-gerbes} (see \cite[\S1.2.2]{CJR22P}).

\Cref{thm:main} may be used to remove this extra assumption.
More specifically, Part \eqref{thm:main3} of \Cref{thm:main} gives
rise to a morphism $BT \times \infty \to \infty$ fitting into a
diagram
\begin{equation*}
  \xymatrix{
    BT \times \infty \ar[r] \ar[d] & \infty \ar[d] \\
    BT \times B\bG_m \ar[r] & B\bG_m
  },
\end{equation*}
where the lower arrow is induced by $T \times \bG_m \to \bG_m$,
$(t, \lambda) \mapsto t^r \lambda$.
In \cite[\S7.2]{CJR22P}, the analogous structure is called a
``twist'', and is only constructed in the root stack setting.
Existence of a twist is crucial for understanding the structure of the
forgetful morphisms.
Thus, \Cref{thm:main} allows removing the root stack assumption
for the results of \cite{CJR22P}.

\section{Gerbes and torus actions on stacks}

\subsection{Gerbes and rigidification}
  We follow the conventions of \cite{giraud} for gerbes.  
  If $\mu$ is a finite diagonalizable group scheme, then a $\mu$-gerbe $X \to Y$ by definition is a banded $\mu$-gerbe, i.e. $X \to Y$ is a gerbe together with the data of isomorphisms $\psi_x \co \mu_T \to \underline{\Aut}(x)$ for every object $x \in X(T)$ such that for an isomorphism $\alpha \co x \to y$ over $T$, 
  $$
		\xymatrix{
			&	G|_{T} \ar[rd]^{\psi_y} \ar[ld]_{\psi_x} \\
		\underline{\Aut}_T(x) \ar[rr]^{{\rm Inn}_\alpha}		&		& \underline{\Aut}_T(y).
		}
	$$
	commutes, where ${\rm Inn}_\alpha(\tau) = \alpha \tau \alpha^{-1}$.  In other words, there is an isomorphism $\psi \co \mu_{X} \to I_{X/Y}$ of relative group schemes over $X$.

  For an algebraic stack $X$ and a closed fppf subgroup $\mu \subset I_X$, the rigidification $X \rigidify \mu$ is defined as the stackification over $({\rm Sch}/\bk)_{\rm fppf}$ of the prestack with the same objects as $X$ but where the set of morphisms between $a \in X(S)$ and $b \in X(T)$ over $f \co S \to T$ is ${\rm Mor}_{X}(a,f^*b)/\mu(S)$.  The rigidification $X \rigidify \mu$ is an algebraic stack and $X \to X \rigidify \mu$ is a $\mu$-gerbe. 

  The universal property of a $\mu$-rigidification $\cX \rigidify \mu$ is as follows:  if $\cX \to \cZ$ is a morphism such that the composition $\mu_{r} \into I_{\cX} \to I_{\cZ} \times_{\cZ} \cX$ is the trivial map of group schemes, then there is a map $\cX \rigidify \mu \to \cZ$ unique up to unique isomorphism filling in the diagram
  $$\xymatrix{
    \cX \ar[d] \ar[rd] \\
    \cX \rigidify \mu \ar@{-->}[r]
      & \cZ.
  }$$
  If $\cZ$ is Deligne--Mumford, then by \Cref{lem:trivial-map-of-group-schemes} below, it is enough to check that for every geometric point $x \in \cX(L)$ with image $z \in \cZ(L)$, there is an inclusion
  $$\mu_x \subset \ker(\Stab_{\cX}(x) \to \Stab_{\cZ}(z)).$$

  \begin{lemma} \label{lem:trivial-map-of-group-schemes}
    Let $H \to G$ be a morphism of group schemes, each finite type over a noetherian scheme $S$.  Suppose that $G \to S$ is unramified.  Then $H \to G$ is the trivial map if and only if for every geometric point $s \in S(L)$, the map $H_s \to G_s$ of group schemes over $L$ is the trivial map
  \end{lemma}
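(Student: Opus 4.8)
The plan is to prove \Cref{lem:trivial-map-of-group-schemes}. The ``only if'' direction is trivial: if $H \to G$ is the trivial map (factoring through the identity section $S \to G$), then its restriction to any geometric fiber is also trivial. So the content is in the ``if'' direction, and this is where the hypothesis that $G \to S$ is unramified is essential.

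The key idea is to reduce the statement to a question about the closed subscheme structure of $G$. A homomorphism $\phi \co H \to G$ is trivial precisely when it factors through the identity section $e \co S \to G$, i.e. when the composite $H \xrightarrow{\phi} G$ lands inside the image of $e$. First I would observe that since $G \to S$ is unramified and $S$ is noetherian, the identity section $e \co S \to G$ is an open immersion onto its image (a section of an unramified, in particular a closed immersion by \cite{EGA} when $G \to S$ is separated; more robustly, any section of an unramified morphism is an open immersion, and it is a closed immersion when $G \to S$ is separated, which holds for a group scheme). Thus $e(S) \subset G$ is an open and closed subscheme. The plan is then to consider the preimage $\phi^{-1}(e(S)) \subset H$, which is open and closed in $H$, and to show it equals all of $H$.

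The central step is a topological/scheme-theoretic argument. Consider the largest open subscheme $U \subset H$ on which $\phi$ agrees with the trivial homomorphism $e \circ (\text{structure map})$; since $e(S)$ is open in $G$, this $U$ is simply $\phi^{-1}(e(S))$, which is open in $H$. The hypothesis on geometric fibers says precisely that for every geometric point $s$, the entire fiber $H_s$ maps into $e(S)_s = \{e(s)\}$, i.e. $H_s \subset U$. Since $U$ contains every geometric fiber of $H \to S$ set-theoretically and $U$ is open, $U$ contains every point of $H$, so $U = H$ as a topological space. I would then upgrade this to a scheme-theoretic equality: because $e(S)$ is also \emph{closed} in $G$ (as $G \to S$ is an unramified, hence separated-diagonal, group scheme so the identity section is a closed immersion), the closed subscheme $\phi^{-1}(e(S)) \hookrightarrow H$ has underlying space all of $H$ but is a closed immersion, hence the scheme-theoretic preimage is all of $H$. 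This shows $\phi$ factors scheme-theoretically through $e(S) \cong S$, which is exactly the statement that $\phi$ is the trivial homomorphism.

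The main obstacle I expect is the careful bookkeeping around the section $e \co S \to G$ and making precise that it is both open and closed, so that the fiberwise containment (a statement about geometric points) genuinely forces a scheme-theoretic factorization rather than just a set-theoretic one. The cleanest formulation is: for an unramified group scheme $G \to S$, the identity section is an open immersion, and one checks it is closed using that group schemes are separated over a base along their identity section. One must also be slightly careful that ``unramified'' is used in the strong sense (locally of finite type plus $\Omega_{G/S} = 0$, equivalently formally unramified and locally of finite type) so that the diagonal and hence the identity section behave well; the noetherian hypothesis on $S$ together with $G$ being finite type ensures all the relevant morphisms are of finite presentation, so no subtleties about non-noetherian descent arise.
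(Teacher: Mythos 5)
Your overall strategy coincides with the paper's, which is a two-line argument: since $G \to S$ is unramified, the identity section $e \co S \to G$ is an open immersion, and since each geometric fiber of $H$ maps into $e(S)$, the morphism $H \to G$ factors through $e(S)$. However, your final ``upgrade'' step is fallacious as stated. You argue that because $e(S)$ is also \emph{closed}, the closed subscheme $\phi^{-1}(e(S)) \hookrightarrow H$ has full underlying space and is therefore all of $H$; but a closed immersion that is surjective on underlying topological spaces need not be an isomorphism (consider $H_{\mathrm{red}} \hookrightarrow H$), so closedness alone does not yield the scheme-theoretic equality. The correct justification is the one you already set up with openness and then abandoned: $\phi^{-1}(e(S)) = H \times_G e(S) \to H$ is an \emph{open} immersion because $e(S) \subset G$ is an open subscheme, and an open immersion that is surjective on points is an isomorphism, since an open subscheme is determined by its underlying open set. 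Hence no ``upgrade'' is needed, and the entire detour through closedness --- which in any case requires $G \to S$ to be separated, a hypothesis not present in the statement --- should be deleted. With that one correction your argument is exactly the paper's proof.
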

  
  \begin{proof}
    Since $G \to S$ is unramified, the identity section $S \to G$ is an open immersion.  As each geometric fiber $H_s$ factors through the identity section, so does $H$.  
  \end{proof}

  Given $\mu$-gerbes $X_1 \to Y$ and $X_2 \to Y$, the contracted product $X_1 \wedge^{\mu} X_2$ is the $\mu$-gerbe over $Y$ defined as the rigidification $(X_1 \times X_2) \rigidify \mu$ where $\mu \to I_{X_1 \times X_2}$ is the diagonal subgroup induced from the $\mu$-bands of $X_1$ and $X_2$.

\subsection{Group actions on stacks}
We follow \cite{romagny} for our conventions for group actions on stacks.  
A group action of an algebraic group $G$ on an algebraic stack $X$ is a morphism $\mu\co  G \times X \to X$ of stacks such that the usual diagrams
\[
    \begin{matrix}
        \xymatrix{
            G \times_S G \times_S X \ar[r]^{\hspace{0.5cm}\id_G \times \mu} \ar[d]_{m \times \id_X} 
                & G \times_S X \ar[d]^{\mu} \\
            G \times_S X \ar[r]^{\mu}       & G        
        }
    \end{matrix}
    \qquad
    \begin{matrix}
        \xymatrix{
            X \ar[r]^{(e, \id_X) \hspace{0.5cm}} \ar[rd]_{\id_X}       
                & G \times_S X \ar[d]^{\mu} \\
                & X
        } 
    \end{matrix}
\]
\emph{strictly} commute.  

\begin{example}[Actions on $BG$]
  \label{ex:action-on-BG}
  If $G$ is a finite group, then a $\bG_m^n$-action on $BG$ is equivalent to a short exact sequence
  $$1 \to G \to \Gamma \to \bG_m^n \to 1.$$ 
  The action is trivial if and only if the sequence splits if and only if the sequence splits trivially.  

  To see this, observe that such a short exact sequence induces a cartesian diagram
  $$\xymatrix{
    \bG_m^n \ar[r] \ar[d]
      & BG \ar[r] \ar[d] 
      & \Spec \bk \ar[d] \\
    \Spec \bk \ar[r]
      & B \Gamma \ar[r]
      & B \bG_m^n.
  }$$
  In particular, $BG \to B\Gamma$ is a $\bG_m^n$-torsor and $\bG_m^n$ acts on $BG$.  Conversely, the quotient stack $[BG/\bG_m^n]$ 
  has one point and is thus isomorphic to $B \Gamma$ for some algebraic group $\Gamma$.  The composition $BG \to B \Gamma \to B \bG_m^n$ induces a short exact sequence $1 \to G \to \Gamma \to \bG_m^n \to 1$.
\end{example}

\subsection{Fixed locus of a torus action}
The are several distinct ways to formulate the fixed locus for the action of a torus $T$ on a Deligne--Mumford stack $X$.  First, there is the stack of morphisms
$$\underline{\rm Mor}^T(\Spec \bk, X),$$
where an object over a $\bk$-scheme $S$ is the data of a $T$-equivariant map $S \to X$, where $S$ has the trivial action.  This is an algebraic stack and if $X$ has separated diagonal, then $\underline{\rm Mor}^T(\Spec \bk, X) \to X$ is a closed immersion.   For the nontrivial action of $\bG_m$ on $B\mu_n$ arising from the Kummer sequence, the stack  $\underline{\rm Mor}^T(\Spec \bk, B \mu_n)$ is empty.  Note that the cover $\Spec \base \to B \mu_n$ is not $\bG_m$-equivariant, but it becomes equivariant with respect to the reparameterization $\bG_m \to \bG_m$ given by $t \mapsto t^n$.  

We define the \emph{fixed stack} as the union of the closed substacks
$$X^T := \bigcup_{T' \to T} \underline{\rm Mor}^{T'}(\Spec \bk, X)$$
over all homomorphisms $T' = \bG_m^n \to \bG_m^n = T$ with finite kernel.  In the example of a nontrivial $\bG_m$ action on $B\mu_n$, the fixed locus is $B\mu_n$. 
See \cite[\S~5.4]{ahr}, \cite[\S~6.3]{Kr99}, and \cite[Appendix~A]{AKLPR22P} for a further discussion of fixed loci.

\begin{remark}[Trivial after reparameterization]
  \label{rmk:triviality}
  Consider the following three conditions for an action of a torus $T$ on a separated Deligne--Mumford stack $X$ of finite type over $\bk$:
  \begin{enumerate}[(a)]
    \item $X^T = X$, 
    \item there exists a reparameterization $T' \to T$ of tori such that the $T'$-action on $X$ is trivial,
    \item the induced $T$-action on $X_{\cms}$ is trivial.
  \end{enumerate}
  We claim that there are implications
  $$\text{(a)} \Longleftrightarrow \text{(b)} \Longrightarrow \text{(c)}, $$
  and that (c) $\Longrightarrow$ (b) if in addition $X$ is reduced.  The equivalence $\text{(a)} \Longleftrightarrow \text{(b)}$ follows from the definition of the fixed stack $X^T$.  Observe that the action of a torus $T$ on a separated algebraic space $Y$ is trivial if and only if it is trivial after a reparameterization $T' \to T$: indeed, this follows from flat descent as the stabilizer subgroup scheme $\Stab_X^T \subset T \times X$ pulls back to $\Stab_X^{T'} \subset T' \times X$.  Therefore, $\text{(b)} \Longrightarrow \text{(c)}$.

  Assuming (c), \cite[Proposition~5.32]{ahr} 
  implies that that there is a reparameterization $T' \to T$ such that $\underline{\rm Mor}^{T'}(\Spec \bk, X) \into X$ is a nilimmersion.  Therefore, if $X$ is reduced, $X^{T} = X$.   
\end{remark}

\subsection{Universal properties}
The goal of this subsection is to prove and elaborate on the universal property of the factorization 
$$\cX \to \cX \rigidify \mu_r \cong Y \times B \bG_m^n \to X_{\cms} \times B \bG_m^n$$
in Part \eqref{thm:main1} of \Cref{thm:main}.   

\begin{proposition} \label{prop:universal-properties}
  Under the hypotheses of \Cref{thm:main}, assume that there is a central diagonalizable closed subgroup scheme $\mu_{r, \cX} \subset I_{\cX/B\bG_m^n}$ with $r= (r_1, \ldots, r_n)$ and a factorization 
  \begin{equation}
    \label{eqn:factorization-universal-property}
    \cX \to \cX \rigidify \mu_{r,\cX} \cong Y \times B \bG_m^n \to X_{\cms} \times B \bG_m^n,
  \end{equation}
  where $Y$ is a Deligne--Mumford stack, 
  such that for every geometric point $x \in X(L)$, the restriction of $\mu_{r, \cX} \subset I_{\cX/B\bG_m^n}$ to $x$ is identified
  with the intersection $\Stab_X(x) \cap \Stab_{\cX}(x)^{0}$ inside $\Stab_{\cX}(x)$. 
  Then the factorization \eqref{eqn:factorization-universal-property} satisfies the following two universal properties:
  \begin{enumerate}[(a)] 
    \item \label{prop:universal-properties-a}
        if $\cX \to Y' \times B \bG_m^n \to X_{\cms} \times B \bG_m^n$ is another factorization where $Y'$ is a Deligne--Mumford stack, then there is a morphism $g \co Y \to Y'$ unique up to unique isomorphism fitting in the diagram
      $$\xymatrix{
        \cX \ar[r] \ar[rd]
          & Y \times B\bG_m^n \ar@{-->}[d]^{g \times \id} \ar[r]
          & X_{\cms} \times B\bG_m^n \\
          & Y' \times B\bG_m^n . \ar[ur] 
      }$$
    \item \label{prop:universal-properties-b} 
      if $\cH \subset I_{\cX/B\bG_m^n}$ is a closed fppf subgroup such that $\cX \rigidify \cH \cong Y' \times B\bG_m^n$ over $X_{\cms} \times B\bG_m^n$ for some Deligne--Mumford stack $Y'$, then $\mu_{r,\cX} \subset \cH$. 
  \end{enumerate}
\end{proposition}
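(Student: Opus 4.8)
The plan is to derive both universal properties from a single geometric input: for every geometric point $x \in X(L)$ the identity component $\Stab_{\cX}(x)^0$ is connected, so any homomorphism from it into a finite group is trivial. Combined with \Cref{lem:trivial-map-of-group-schemes}, which reduces triviality of a homomorphism into an unramified group scheme to its geometric fibers, this lets me verify each required factorization by a uniform fiberwise computation. Throughout I treat all the factorizations as taken over $B\bG_m^n$, so that a morphism to $Y' \times B\bG_m^n$ lands, on relative inertia, in the relative inertia $I_{Y'}$ over $B\bG_m^n$.

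For part \eqref{prop:universal-properties-a}, I would first produce the dashed arrow from the universal property of the $\mu_{r,\cX}$-rigidification. Writing $b \co \cX \to Y' \times B\bG_m^n$ for the given factorization, the point is that the composite $\mu_{r,\cX} \into I_{\cX/B\bG_m^n} \to b^* I_{Y'}$ is trivial: the target is unramified over $\cX$ since $Y'$ is Deligne--Mumford, so by \Cref{lem:trivial-map-of-group-schemes} it suffices to check the geometric fibers, where by hypothesis $\mu_{r,\cX}|_x = \Stab_X(x) \cap \Stab_{\cX}(x)^0 \subset \Stab_{\cX}(x)^0$ maps into the finite group $\Stab_{Y'}(y')$ and hence trivially, by connectedness. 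The universal property of rigidification then yields $h \co Y \times B\bG_m^n \to Y' \times B\bG_m^n$, unique up to unique isomorphism, with $h$ recovering $b$ after composition with $\cX \to Y \times B\bG_m^n$.

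It then remains to pin down the product form $h = g \times \id$. For the first factor I would apply rigidification once more, using $Y \cong (Y \times B\bG_m^n)\rigidify \bG_m^n$: the same connectedness argument, applied now to the $\bG_m^n$-band via \Cref{lem:trivial-map-of-group-schemes}, shows that $p_1 \circ h$ descends along $p_1$ to a morphism $g \co Y \to Y'$. For the second factor I would observe that $p_2 \circ h$ and $p_2$ agree after precomposition with the gerbe $\cX \to Y \times B\bG_m^n$ (both recover the structure morphism $\cX \to B\bG_m^n$), hence agree by descent along this fppf epimorphism, giving $h = g \times \id$; uniqueness of $g$ up to unique isomorphism follows from uniqueness in the two rigidifications together with $p_1$ being an epimorphism. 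Part \eqref{prop:universal-properties-b} is then the same computation read backwards: since $\cX \to \cX \rigidify \cH \cong Y' \times B\bG_m^n$ is the rigidification $\cH$-gerbe and $\cH \subset I_{\cX/B\bG_m^n}$, one has $\cH = \ker\bigl(\phi \co I_{\cX/B\bG_m^n} \to I_{Y'}\times_{Y'}\cX\bigr)$ for the relative inertia map induced by the projection to $Y'$; as $Y'$ is Deligne--Mumford the target is unramified, so $\mu_{r,\cX} \subset \ker\phi = \cH$ may again be checked on geometric fibers, where it is exactly the statement that $\Stab_X(x)\cap\Stab_{\cX}(x)^0 \subset \Stab_{\cX}(x)^0$ dies in $\Stab_{Y'}(y')$.

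I expect the genuinely delicate point to be the bookkeeping in part \eqref{prop:universal-properties-a} that isolates the $B\bG_m^n$-component of $h$ and upgrades the factorization to a \emph{strict} product $g \times \id$: this requires tracking that every factorization is taken over $B\bG_m^n$ and invoking descent along the gerbe $\cX \to Y \times B\bG_m^n$ to compare the two morphisms to $B\bG_m^n$. By contrast, the group-scheme-theoretic heart of both parts is short and uniform, resting entirely on \Cref{lem:trivial-map-of-group-schemes} and the connectedness of $\Stab_{\cX}(x)^0$.
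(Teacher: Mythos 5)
Your proof is correct and follows essentially the same route as the paper's: reduce via the universal property of rigidification and \Cref{lem:trivial-map-of-group-schemes} to a fiberwise check at geometric points, where triviality holds because $\mu_{r,\cX}|_x \subset \Stab_{\cX}(x)^0$ lies in a connected group mapping to the finite stabilizer of a Deligne--Mumford stack. The additional bookkeeping you supply to isolate the product form $g \times \id$ is a step the paper leaves implicit, but it is consistent with (and a useful elaboration of) the intended argument.
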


\begin{proof}
  For \eqref{prop:universal-properties-a}, by the universal property of the rigidification and \Cref{lem:trivial-map-of-group-schemes},
  it suffices to show that for every geometric point $x \in \cX(L)$ with image $y' \in Y'(L)$, the induced map $\mu_r \into \Stab_{\cX}(x) \to \Stab_{Y'}(y')$ is trivial.
  Consider the group $T = \Stab_{\cX}(x)^0$, which is necessarily
  isomorphic to $\bG_m^n$ (see \S\ref{ss:extension}).
  Since $\Stab_{Y'}(y')$ is finite, the morphism
  $T \to \Stab_{Y'}(y')$ is necessarily trivial.
  Since by assumption, $T$ contains $\mu_r$, we conclude the proof of
  Part \eqref{prop:universal-properties-a}.

  The same argument can be used to show \eqref{prop:universal-properties-b}.  Alternatively, we can see \eqref{prop:universal-properties-b} as a consequence of \eqref{prop:universal-properties-a} since it implies the existence of the dotted arrow
  $$\xymatrix{
    \cX \ar[r] \ar[rd]
        & \cX \rigidify \mu_r \ar@{-->}[d] \ar[r]
        & X_{\cms} \times B \bG_m^n\\
        & \cX \rigidify \cH \ar[ur], 
  }$$
  which in turn implies the inclusion $\mu_{r,\cX} \subset \cH$.
\end{proof}

\begin{remark}
  The universal property of \eqref{prop:universal-properties-a} does not hold for an arbitrary algebraic stack $Y'$. For instance, for the action of $\bG_m$ on $X = B\mu_r$ arising from the Kummer sequence where
  $Y = \Spec \base$ and the $\mu_r$-rigidification $p \co \cX = BT \to B \bG_m$, the map $(\id, p) \co \cX \to BT \times B \bG_m$ does not factor through $p \co \cX \to B \bG_m$.

  We also point out that while the subgroup $\mu_{r,\cX} \subset I_{\cX/B\bG_m^n}$ is central, the universal property \eqref{prop:universal-properties-b} does not require $\cH$ to be.  
\end{remark}

\section{Extensions of tori and the quotient case}

In this section, we prove a description of extensions of tori by finite groups, which we then apply to prove \Cref{thm:main} and \Cref{thm:main-alt} in the cases of classifying and quotient stacks.

\subsection{Extensions of $\bG_m^n$ by finite groups}
\label{ss:extension}
Let $\Gamma$ be an algebraic group over $\bk$ that is a group extension of $\bG_m^n$
by a finite group $G$:
\begin{equation}
  \label{eq:ext-gamma}
  1 \to G \to \Gamma \to \bG_m^n \to 1,
\end{equation}
which by \Cref{ex:action-on-BG} corresponds to a $\bG_m^n$-action on $BG$. 
The connected component $\Gamma^0 \subset \Gamma$ of the identity is a reductive algebraic group of dimension $n$.  Since we have a surjection $\Gamma^0 \to \bG_m^n$, by
\cite[Cor.~1 of Prop.~11.14]{borel} there is a torus $T \subset \Gamma^0$ such the composition $T \into \Gamma^0 \to \bG_m^n$ is an isogeny.  Since $\dim \Gamma^0 = n$, we conclude that $\Gamma^0 = T$ and that $T$ is an $n$-dimensional torus. 

The composition $T \into \Gamma \to \bG_m^n$ fits into an exact sequence
\begin{equation}
  \label{eq:ext-T}
  1 \to \mu_r \to T \to \bG_m^n \to 1,
\end{equation}
for a diagonalizable group scheme $\mu_r$.  The map $T \to \bG_m^n$ corresponds to an injective map of free $\bZ$-modules of rank $n$.  By the structure theorem of submodules of $\bZ^r$, we can choose a basis such that $T \to \bG_m^n$ is given by $(t_1, \ldots, t_n) \mapsto (t_1^{r_1}, \ldots, t_n^{r_n})$ for a unique tuple $r = (r_1, \ldots, r_n)$ of integers with $r_1 | r_2 | \cdots | r_n$.  This implies that
$$ \mu_r = \mu_{r_1} \times \cdots \times \mu_{r_n}.$$
By uniqueness of $T$, the subgroup $\mu_r \subset \Gamma$ is
characteristic, and hence normal.

The inclusion $\mu_r \into T \into \Gamma$ factors through $G$ yielding an exact sequence
\begin{equation} \label{eqn:gamma-mod-mur}
  1 \to \bar{G} \to \bar{\Gamma} \to \bG_m^n \to 1,
\end{equation}
where $\bar{G} = G/\mu_r$ and $\bar{\Gamma} = \Gamma/\mu_r$. 

\begin{lemma} \label{lem:split}
  The sequence \eqref{eqn:gamma-mod-mur} is trivially split, 
  i.e. $\bar{\Gamma} \cong \bar{G} \times \bG_m^n$.
\end{lemma}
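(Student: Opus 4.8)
The plan is to produce a section of \eqref{eqn:gamma-mod-mur} coming from the torus $T$, and then to upgrade the resulting splitting to a \emph{direct} product by observing that a connected group cannot act nontrivially on a finite \'etale group.

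First I would construct the section. Recall from \eqref{eq:ext-T} that the restriction $T \to \bG_m^n$ of the projection $\Gamma \to \bG_m^n$ is surjective with kernel exactly $\mu_r = T \cap G$. Since $\mu_r \subset T$, the image $\bar T$ of $T$ in $\bar\Gamma = \Gamma/\mu_r$ is the quotient $T/\mu_r$, and the inclusion $T \hookrightarrow \Gamma$ descends to an injection $\bar T = T/\mu_r \hookrightarrow \bar\Gamma$. The induced map $\bar T = T/\mu_r \to \bG_m^n$ is then an \emph{isomorphism} (not merely an isogeny), so $\bar T \hookrightarrow \bar\Gamma$ is a section of \eqref{eqn:gamma-mod-mur}. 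This exhibits $\bar\Gamma$ as a semidirect product $\bar\Gamma \cong \bar G \rtimes \bar T$ with $\bar G = G/\mu_r$ the normal kernel of $\bar\Gamma \to \bG_m^n$.

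It then remains to show the action is trivial, i.e. that $\bar T$ centralizes $\bar G$. Conjugation in $\bar\Gamma$ defines a homomorphism $\bar T \to \uAut(\bar G)$. Since $\bar G$ is a finite \'etale group, its automorphism group scheme $\uAut(\bar G)$ is finite and \'etale, hence discrete; as $\bar T \cong \bG_m^n$ is connected, this homomorphism must be trivial. Therefore $\bar T$ and $\bar G$ commute and $\bar\Gamma \cong \bar G \times \bG_m^n$, as claimed. Note that this argument is self-contained and does not presuppose the centrality of $T$ in $\Gamma$ (which is the content of the later \Cref{lem:central}).

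The only genuine subtlety is the final step: one must know that $\bar G$ is finite \'etale so that $\uAut(\bar G)$ is discrete and the connectedness argument applies. This is exactly where the tameness hypothesis enters, ensuring that $\mu_r$ (and hence the finite subquotient $\bar G$ of $G$) has order prime to the characteristic; without it $\uAut(\bar G)$ could be positive-dimensional and the conclusion could fail. The construction of the section, by contrast, is purely formal and relies only on the identification $\mu_r = T \cap G$ from \S\ref{ss:extension}.
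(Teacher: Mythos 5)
Your proof is correct and follows essentially the same route as the paper: descend the inclusion $T \hookrightarrow \Gamma$ to a section $T/\mu_r \cong \bG_m^n \to \bar\Gamma$, then kill the conjugation action via the observation that there are no nontrivial homomorphisms from the connected group $\bG_m^n$ to the finite (hence discrete) automorphism group of $\bar G$. Your additional remarks on injectivity of $T/\mu_r \hookrightarrow \bar\Gamma$ and on where finiteness/\'etaleness of $\bar G$ is used merely flesh out details the paper leaves implicit.
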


\begin{proof}
    The inclusion $T \into \Gamma$ descends to a homomorphism 
    $$T/\mu_{r} = \bG_m^n \to \bar{\Gamma} = \Gamma/\mu_r$$
    splitting \eqref{eqn:gamma-mod-mur}.  The lemma follows as there are no 
    non-trivial homomorphisms $\bG_m^n \to \Aut(\bar{G})$
\end{proof}

In other words, the lemma gives an exact sequence
\begin{equation*}
  1 \to \mu_r \to \Gamma \to \bar{G} \times \bG_m^n \to 1.
\end{equation*}
Moreover,  since the 
homomorphism $G \times T \to \Gamma$ is surjective, there is an exact sequence
\begin{equation}
  \label{eq:Gamma-quotient}
  1 \to \mu_r \to G \times T \to \Gamma \to 1
\end{equation}
induced by the inclusions $G \subset \Gamma$ and $T \subset \Gamma$.

\begin{lemma} \label{lem:central}
  The subgroup $T \subset \Gamma$ is central with quotient $\Gamma/T \cong \bar{G}$.
\end{lemma}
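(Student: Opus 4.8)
The plan is to establish the two assertions separately, deriving both from the splitting $\bar\Gamma \cong \bar G \times \bG_m^n$ of \Cref{lem:split} together with the dichotomy that $T$ is connected while $\mu_r$ is finite.

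For the quotient statement, I would use the surjection $G \times T \to \Gamma$ recorded in \eqref{eq:Gamma-quotient}. Composing with the projection $\Gamma \to \Gamma/T$ kills the $T$-factor, so already the composite $G \into \Gamma \to \Gamma/T$ is surjective. Its kernel is $G \cap T = \mu_r$, whence $\Gamma/T \cong G/\mu_r = \bar G$, as claimed.

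For centrality, consider the conjugation action $c \co \Gamma \to \Aut(T)$. Since $\mu_r$ is characteristic in $T$, conjugation descends to an action on $T/\mu_r \cong \bG_m^n$, and I would first note that the induced map $\Aut(T) \to \Aut(T/\mu_r)$ is injective: on character lattices it sends a $\bZ$-linear automorphism of $X^*(T)$ to its restriction to the finite-index subgroup $X^*(T/\mu_r)$, and an automorphism fixing a finite-index subgroup pointwise must be the identity. Now $\Gamma \to \Aut(T/\mu_r)$ factors through $\bar\Gamma = \Gamma/\mu_r$, where it is the conjugation action of $\bar\Gamma$ on $T/\mu_r$; but under the isomorphism $\bar\Gamma \cong \bar G \times \bG_m^n$ the subgroup $T/\mu_r$ is precisely the central factor $\bG_m^n$, so this action is trivial. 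By injectivity, $c$ itself is trivial, i.e. $T$ is central. Equivalently and more directly, the same reduction shows $\gamma t \gamma^{-1} t^{-1} \in \mu_r$ for all $\gamma \in \Gamma$ and $t \in T$; for fixed $\gamma$, since $T$ is abelian, $t \mapsto \gamma t \gamma^{-1} t^{-1}$ is a homomorphism from the connected group $T$ to the finite group $\mu_r$, hence trivial.

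The only real content is the centrality argument, and the key point I expect to rely on is that the conjugation action is entirely controlled by the already-established product decomposition of $\bar\Gamma$; once the action is pushed down to $T/\mu_r$, the tension between connectedness of $T$ and finiteness of $\mu_r$ forces triviality. The remaining verifications are formal.
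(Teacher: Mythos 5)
Your proof is correct, and for the centrality claim---the real content of the lemma---it follows essentially the same route as the paper: push the conjugation action $\Gamma \to \Aut(T)$ down to $T/\mu_r \cong \bG_m^n$, observe via \Cref{lem:split} that the induced action of $\bar{\Gamma} \cong \bar{G} \times \bG_m^n$ on its central factor $\bG_m^n$ is trivial, and lift back. You differ from the paper in two worthwhile ways. First, for the quotient identification the paper runs a snake-lemma argument on the map of extensions $(T, \Gamma, K) \to (\bG_m^n, \bar{\Gamma}, \bar{G})$, whereas you use the surjection $G \times T \to \Gamma$ from \eqref{eq:Gamma-quotient} and the second isomorphism theorem $\Gamma/T \cong G/(G \cap T) = \bar{G}$; your version is shorter and avoids introducing the auxiliary cokernel $K$. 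Second, on lattices you only claim that restriction $\Aut(T) \to \Aut(T/\mu_r)$ is injective (an automorphism of $X^*(T) \cong \bZ^n$ fixing a finite-index sublattice is the identity after tensoring with $\mathbb{Q}$), which is both sufficient and more careful than the paper's assertion that this map is an isomorphism of copies of $\GL_n(\bZ)$; and your fallback argument---that $t \mapsto \gamma t \gamma^{-1}t^{-1}$ is a homomorphism from the connected group $T$ to the finite group $\mu_r$, hence trivial---is a robust elementary alternative that sidesteps lattice considerations entirely. One small imprecision, which you share with the paper: $\mu_r = \mu_{r_1} \times \cdots \times \mu_{r_n}$ is generally \emph{not} characteristic in $T$ when the $r_i$ are distinct (e.g. $1 \times \mu_2 \subset \bG_m^2$ is not preserved by the swap), so the correct reason conjugation descends to $T/\mu_r$ is that $\mu_r = G \cap T$ is an intersection of normal subgroups of $\Gamma$ and hence normal in $\Gamma$; correspondingly, your injectivity statement should be read on the subgroup of $\Aut(T)$ preserving $\mu_r$, which is where the image of $\Gamma$ lands. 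This does not affect the validity of the argument.
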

\begin{proof}
  We first show that the sequence
  \begin{equation}
    \label{eq:lesTH}
    1 \to T \xrightarrow{i} \Gamma \xrightarrow{j} \bar{G} \to 1
  \end{equation}
  is exact, where $i$ is the inclusion, and $j$ is the composition
  $\Gamma \to \bar{\Gamma} \to \bar{G}$.
  Clearly, $i$ is injective and $j$ is surjective.
  Now consider the commutative diagram
  \begin{equation*}
    \xymatrix{
      1 \ar[r] & T \ar[r]^i \ar[d]^p & \Gamma \ar[d]^{q} \ar[dr]^{j} \ar[r] & K \ar[r] \ar[d]^k & 1\\
      1 \ar[r] & \bG_m^n \ar[r] & \bar{\Gamma} \ar[r] & \bar{G} \ar[r] & 1
    }
  \end{equation*}
  with exact rows.
  Since $p$ is surjective, and $\ker(p) \to \ker(q)$ is an
  isomorphism, the snake lemma implies that $k$ is an isomorphism,
  yielding the exact sequence \eqref{eq:lesTH}.

Since $T$ is normal in $\Gamma$, conjugation induces a homomorphism $\Gamma \to \Aut(T) = \GL_n(\bZ)$. Similarly, since $\bG_m^n$ is normal in $\bar{\Gamma}$, there is a homomorphism $\bar{\Gamma} \to \Aut(\bG_m) = \GL_n(\bZ)$.  The subgroup $\mu \subset T$ is characteristic, and there is an induced map $\Aut(T) \to \Aut(\bG_m^n)$, which must be an isomorphism.  This gives a commutative diagram
$$\xymatrix{
    \Gamma \ar[r] \ar[d] 
      & \Aut(T) \ar[d]^{\rotatebox{90}{$\sim$}} \\
    \bar{\Gamma} \ar[r]
      & \Aut(\bG_m^n).
  }$$
  Since $\bar{\Gamma} \cong \bar{G} \times \bG_m^n$, we conclude that $\bar{\Gamma} \to \Aut(\bG_m^n)$ is trivial and thus so is $\Gamma \to \Aut(T)$.
\end{proof}

\begin{proposition} \label{prop:classifying-stack-case}
  \Cref{thm:main} holds for $\cX = B \Gamma$.
\end{proposition}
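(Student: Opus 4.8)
The plan is to translate the group-theoretic facts established in \Cref{lem:split,lem:central} into statements about the classifying stacks $BG$, $B\Gamma$, $BT$, and $B\bar{\Gamma}$, using the dictionary between extensions of $\bG_m^n$ and $\bG_m^n$-actions from \Cref{ex:action-on-BG}. Throughout I set $X = BG$, $\cX = B\Gamma$, $Y = B\bar{G}$, with structure morphism $\cX \to B\bG_m^n$ induced by $\Gamma \to \bG_m^n$. Since $\bG_m^n$ is abelian, the relative inertia is $I_{\cX/B\bG_m^n} = [G/\Gamma]$ (with $\Gamma$ acting by conjugation on its normal subgroup $G$), and its restriction along the $\bG_m^n$-torsor $X \to \cX$ is $I_X = [G/G]$. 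By \Cref{lem:central} the torus $T$, and hence $\mu_r = G \cap T$, is central in $\Gamma$, so conjugation acts trivially and I obtain the central closed subgroups $\mu_{r,\cX} = \mu_r \times B\Gamma \subset I_{\cX/B\bG_m^n}$ and $\mu_{r,X} = \mu_r \times BG \subset I_X$.

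For Part \eqref{thm:main1}, I would use that rigidifying a classifying stack by a central subgroup takes the quotient, so $\cX \rigidify \mu_{r,\cX} = B\Gamma \rigidify \mu_r \cong B\bar{\Gamma}$, and then invoke \Cref{lem:split} to identify $B\bar{\Gamma} \cong B\bar{G} \times B\bG_m^n = Y \times B\bG_m^n$. The descended $\bG_m^n$-action on $Y$ corresponds to the trivially split extension of \Cref{lem:split}, hence is trivial by \Cref{ex:action-on-BG}; and since both $X_{\cms}$ and $Y_{\cms}$ equal $\Spec \bk$, the factorization through $X_{\cms} \times B\bG_m^n$ follows. The universal property is then exactly \Cref{prop:universal-properties}: its hypotheses hold because $\mu_{r,\cX}$ is central and diagonalizable, and at the unique geometric point $x$ one has $\Stab_{\cX}(x) = \Gamma$, $\Stab_{\cX}(x)^0 = T$, and $\Stab_X(x) = G$, so that $\Stab_X(x) \cap \Stab_{\cX}(x)^0 = G \cap T = \mu_r$ agrees with the restriction of $\mu_{r,\cX}$.

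For Part \eqref{thm:main2}, I would pass to classifying stacks the surjections $G \times T \to \Gamma \to \bar{\Gamma}$. By \eqref{eq:Gamma-quotient} the first has kernel $\mu_r$, so $X \times BT = B(G \times T) \to B\Gamma = \cX$ is a $\mu_r$-gerbe; composing with the $\mu_r$-gerbe $\cX \to B\bar{\Gamma}$ and computing (via the splitting of \Cref{lem:split}) that $G \times T \to \bar{\Gamma} \cong \bar{G} \times \bG_m^n$ has kernel $\mu_r \times \mu_r$ yields the asserted $\mu_r \times \mu_r$-gerbe factorization over $Y \times B\bG_m^n$. For the contracted-product statement I would identify $p_1^* X \times_{Y \times B\bG_m^n} p_2^* BT \cong X \times BT = B(G \times T)$ and observe that rigidifying by $\mu_r$ recovers $B\Gamma = \cX$ by \eqref{eq:Gamma-quotient}.

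For Part \eqref{thm:main3}, centrality of $T$ again produces the trivial central subgroup $T \times \cX \subset I_{\cX}$, and rigidifying gives $\cX \rigidify T \cong B(\Gamma/T) \cong B\bar{G} = Y$ by \Cref{lem:central}. The identification $\cX \rigidify T \cong (\cX \rigidify \mu_{r,\cX}) \rigidify \bG_m^n$ is rigidification in stages applied to the central chain $\mu_r \subset T$ with $T/\mu_r = \bG_m^n$, and the assertion that $\cX \to Y$ is the $T$-gerbe associated to the $\mu_r$-gerbe $X \to Y$ is precisely the statement that $\Gamma$ is the pushout of $G \leftarrow \mu_r \to T$, i.e. $\Gamma = (G \times T)/\mu_r$ from \eqref{eq:Gamma-quotient}. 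The conceptual content is already contained in \Cref{lem:split,lem:central}; I expect the only real care to be needed in matching the gerbe-theoretic operations (rigidification, contracted product, associated gerbe) with the corresponding group quotients and pushouts, which is the most error-prone rather than the most difficult part of the argument.
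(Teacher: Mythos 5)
Your proposal is correct and follows essentially the same route as the paper: centrality of $\mu_r$ and $T$ from \Cref{lem:central} to form the rigidifications $B\Gamma\rigidify\mu_r\cong B\bar\Gamma$ and $B\Gamma\rigidify T\cong B\bar G$, the splitting $\bar\Gamma\cong\bar G\times\bG_m^n$ from \Cref{lem:split}, the universal property via \Cref{prop:universal-properties}, and the sequence \eqref{eq:Gamma-quotient} for Parts \eqref{thm:main2} and \eqref{thm:main3}. You simply spell out more of the gerbe-theoretic bookkeeping (identification of the relative inertia, the stabilizer computation $G\cap T=\mu_r$ verifying the hypothesis of \Cref{prop:universal-properties}) that the paper leaves implicit.
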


\begin{proof}
  \Cref{lem:central} gives central subgroups $\mu_r \subset \Gamma$ and $T \subset \Gamma$ with isomorphisms
  $$
    B\Gamma \rigidify \mu_r \cong B \bar{\Gamma} 
    \quad \text{and} \quad
    B\Gamma \rigidify T \cong B \bar{G} 
  $$
  where as above $\bar{\Gamma} = \Gamma/\mu_r$ and $\bar{G} = G/\mu_r$.  \Cref{lem:split} gives an identification of $\bar{\Gamma} \cong \bar{G} \times \bG_m^n$.
  Defining $Y:=B \bar{G}$, the factorization $B \Gamma \to B \Gamma \rigidify \mu_r \cong Y \times B \bG_m^n \to B \bG_m^n$ satisfies the universal property by \Cref{prop:universal-properties}, yielding Part~\eqref{thm:main1}.
  Part~\eqref{thm:main2} follows from the exact sequence \eqref{eq:Gamma-quotient}, while Part~\eqref{thm:main3} follows from the identification $B\Gamma \rigidify T \cong B \bar{G} = Y$ and \eqref{eq:Gamma-quotient}.
\end{proof}

\subsection{Quotient case}

The case of a quotient stack follows easily from the case of a
classifying stack.
We will use the group-theoretic notation from
Section~\ref{ss:extension}.

\begin{proposition}
  \label{prop:quotient-stack-case-alt}
  \Cref{thm:main-alt} holds for $\cX = [U/\Gamma]$ where $U$ is a
  connected and quasi-separated algebraic space $U$ of finite type
  over $\base$.
\end{proposition}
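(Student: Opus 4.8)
The plan is to follow the proof of \Cref{prop:classifying-stack-case} essentially verbatim, with the classifying stacks $B(-)$ replaced by the quotient stacks $[U/-]$, once we have established the one genuinely new input: that the identity component $T = \Gamma^0$ acts trivially on $U$. Throughout I use the notation of \S\ref{ss:extension}, so that $X = [U/G]$ with $\bG_m^n = \Gamma/G$ acting on $X$, $\cX = [X/\bG_m^n] = [U/\Gamma]$, and $T \subset \Gamma$ is central with $\mu_r = G \cap T$, $\bar{G} = G/\mu_r$, $\bar{\Gamma} = \Gamma/\mu_r$ as in \Cref{lem:central,lem:split}. Granting the triviality of the $T$-action on $U$, the subgroup $T$ (being central in $\Gamma$) defines a central closed subgroup $T \times \cX \subset I_{\cX}$, the subgroup $\mu_r \subset G \cap T$ defines $\mu_{r,\cX} \subset I_{\cX/B\bG_m^n}$, and the rigidifications compute as $\cX \rigidify \mu_r = [U/\bar{\Gamma}]$ and $\cX \rigidify T = [U/\bar{G}] =: Y$, exactly as in the classifying-stack case.

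The crux is therefore to show that $T$ acts trivially on $U$, and this is the only place where the hypothesis of \Cref{thm:main-alt} enters. By assumption there is a reparameterization $\psi \co T' \to \bG_m^n$ for which the $T'$-action on $X = [U/G]$ is trivial. I would form the fiber product $\Gamma' := \Gamma \times_{\bG_m^n} T'$, which is an extension $1 \to G \to \Gamma' \to T' \to 1$ and maps to $\Gamma$ by an isogeny, so that $\Gamma'$ acts on $U$ through $\Gamma$; set $\tilde T := (\Gamma')^0$, a torus surjecting onto $T = \Gamma^0$. Since $G$ is normal in $\Gamma'$, the group $\Gamma'$ acts on $X = [U/G]$ through $\Gamma'/G = T'$, and this action is trivial by hypothesis; restricting along $\tilde T \subset \Gamma'$ (which surjects onto $T'$) and pulling back along $U \to X$, a $2$-isomorphism between the $\tilde T$-action and the projection unwinds---because maps to $[U/G]$ correspond to $G$-torsors together with an equivariant map to $U$---into a morphism $h \co \tilde T \times U \to G$ satisfying $\tilde t \cdot u = h(\tilde t, u) \cdot u$. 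As $U$ is connected and $G$ is finite and \'etale (since $\cX \to B\bG_m^n$ is relatively Deligne--Mumford), $h$ is constant; evaluating at $\tilde t = 1$ shows that its value acts as the identity on $U$, whence $\tilde T$ acts trivially on $U$. Faithfully flat descent along the isogeny $\tilde T \to T$ then gives that $T$ acts trivially on $U$. I expect this to be the main obstacle: the nonreduced example preceding \Cref{thm:main-alt} shows the conclusion genuinely fails without the reparameterization hypothesis, and connectedness of $U$ (to force $h$ constant) is precisely what replaces reducedness in the argument of \Cref{ex:quotient-stack}.

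Finally I would deduce the three parts as in \Cref{prop:classifying-stack-case}. For \eqref{thm:main1}, \Cref{lem:split} together with the triviality of the $\bG_m^n = T/\mu_r$-action on $U$ gives $\cX \rigidify \mu_r = [U/\bar{\Gamma}] \cong [U/\bar{G}] \times B\bG_m^n = Y \times B\bG_m^n$, with $Y_{\cms} = X_{\cms} = U/G$; the universal property follows from \Cref{prop:universal-properties} once one checks that $\mu_{r,\cX}$ restricts to $\Stab_X(x) \cap \Stab_{\cX}(x)^0$ at every geometric point, which is immediate from $\Stab_{\cX}(x) = \Stab_\Gamma(u)$, $\Stab_{\cX}(x)^0 = T$, $\Stab_X(x) = \Stab_G(u)$, and $\Stab_G(u) \cap T = G \cap T = \mu_r$. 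Part \eqref{thm:main2} follows from the exact sequence \eqref{eq:Gamma-quotient}, which identifies $X \times BT = [U/(G \times T)] \to [U/\Gamma] = \cX \to [U/\bar{\Gamma}] = Y \times B\bG_m^n$ with the claimed $\mu_r \times \mu_r$-gerbe and yields the contracted-product description $\cX \cong p_1^* X \wedge^{\mu_r} p_2^* BT$. Part \eqref{thm:main3} follows from $\cX \rigidify T = [U/\bar{G}] = Y$ together with \eqref{eq:Gamma-quotient}, exhibiting $\cX \to Y$ as the $T$-gerbe associated to the $\mu_r$-gerbe $X \to Y$ via $\mu_r \subset T$, exactly as in \Cref{ex:quotient-stack}.
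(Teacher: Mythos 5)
Your proof is correct and follows the same skeleton as the paper's (reduce everything to the triviality of the $T$-action on $U$, then transcribe \Cref{prop:classifying-stack-case} with $B(-)$ replaced by $[U/-]$; the verification of the stabilizer condition for \Cref{prop:universal-properties} and the deduction of parts (2) and (3) from \eqref{eq:Gamma-quotient} match the paper exactly). The one genuinely different ingredient is how you prove the crux, namely that the reparameterized torus acts trivially on $U$ even when $U$ is non-reduced. The paper argues sheaf-theoretically: writing $p \co U \to X$ for the finite \'etale cover, it decomposes $\mathcal A = p_*\oh_U$ into weight spaces, observes that the nonzero-weight part $\mathcal A'$ lies in the nilradical $I\mathcal A$ because the action on $U_{\rm red}$ is already known to be trivial, deduces $\mathcal A' \subset I\mathcal A'$ by weight considerations, and kills $\mathcal A'$ with Nakayama. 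You instead unwind the $2$-isomorphism trivializing the action on $X = [U/G]$, via the torsor description of maps to a quotient stack, into a single morphism $h \co \tilde T \times U \to G$ with $\tilde t\cdot u = h(\tilde t, u)\cdot u$, and use connectedness of $\tilde T \times U$ together with \'etaleness of $G$ to force $h$ to be constant with value acting as the identity; fppf descent along the isogeny $\tilde T \to T$ then finishes. Your argument is more elementary and, notably, bypasses the intermediate claim that the action on $U_{\rm red}$ is trivial (which the paper asserts and which itself requires the quasi-finiteness/reducedness argument of \Cref{ex:quotient-stack}); the paper's argument buys a statement purely in terms of the $\oh_X$-algebra $p_*\oh_U$ that does not require identifying the $2$-isomorphism explicitly. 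One cosmetic point: what your connectedness step really uses is connectedness of $\tilde T$ on each component $\tilde T \times U_i$, so connectedness of $U$ is not what replaces reducedness here (it is needed, as in the paper, only for the uniqueness of the tuple $r$); and the constancy-of-$h$ argument tacitly assumes $G$ is a finite \emph{constant} group, which is the paper's standing convention in \S\ref{ss:extension} rather than a consequence of the relatively Deligne--Mumford hypothesis.
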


\begin{proof}

  By the hypotheses of \Cref{thm:main-alt}, $X:=[U/G] \to BG$ is a
  $\bG_m^n$-equivariant morphism such that the $\bG_m^n$-action on $X$
  becomes trivial after a reparameterization $T'$ of $\bG_m^n$.
  We may assume without loss generality that $T' \to \bG_m^n$ factors
  through $T$.
  Since the action of $T'$ on $X$ is trivial, we have
  $[U / (\Gamma \times_{\bG_m^n} T')] \cong [U / G] \times BT'$ over
  $BT'$.
  We claim that since $p \colon U \to X$ is finite \'etale, the $T'$ 
  action on $U$ is trivial.    
  Indeed, as the $T'$ action on $X$ is trivial, 
  $\mathcal{A} := p_* \oh_{U}$ can be written as the direct sum 
  $\mathcal{A}_0 \oplus \mathcal{A}'$, where $\mathcal{A}_0$ is the weight 
  0 subspace and $\mathcal{A}'$ is the direct sum of the non-trivial weight 
  subspaces. If $I \subset \mathcal{O}_X$ denotes the nilradical, then 
  $I \mathcal{O}_U$ is the nilradical of $U$ (since $U \to X$ is étale) and $I \mathcal{A} = p_*(I \mathcal{O}_U)$
  is the nilradical of $\mathcal{A}$. 
  As $T'$ acts trivially on $U_{\rm red}$, 
  $\mathcal{A}' \subset I \mathcal{A} = I \mathcal{A}_0 \oplus I \mathcal{A}'$, 
  and it follows from weight considerations 
  that $\mathcal{A'} \subset I \mathcal{A'}$.  
  As $\mathcal{A}'$ is a coherent $\mathcal{O}_X$-module (since $U \to X$ is finite), 
  Nakayama's Lemma implies that
  $\mathcal{A}'=0$.

  This implies that the action of $T$ on $U$ is also trivial.
  Therefore, the $\mu_r$-action on $U$ is trivial and we have an isomorphism
  $\cX \rigidify \mu_r \cong [U/\bar{\Gamma}]$.  Using the isomorphism $\bar{\Gamma} \cong \bar{G} \times \bG_m^n$ from \Cref{lem:split}, we have further identifications
  $$\cX \rigidify \mu_r \cong [U/\bar{\Gamma}] \cong [U / (\bar{G} \times \bG_m^n)] \cong [U/\bar{G}] \times B \bG_m^n,$$
  where in the final equivalence we have used that the $\bG_m^n$-action on $U$ must be trivial (as the $T$-action is trivial).

  For each geometric point $x \in X(L)$, restricting the $\bG_m^n$-torsor $X \to \cX$ to residual gerbes at $x$ induces a $\bG_m^n$-torsor $BG_x \to B \Gamma_x$, where $G_x = \Stab_X(x) \subset G$ and $\Gamma_x = \Stab_{\cX}(x)$. As $\Gamma_x^0 = \Gamma^0$, we have the identification $\mu_r = G_x \cap \Gamma_x^0$.  By \Cref{prop:universal-properties}, the factorization 
  $$[U/\Gamma] \to [U/\bar{\Gamma}] = [U/\bar{G}] \times B \bG_m^n \to U/G \times B \bG_m^n$$ satisfies the universal property, yielding part \eqref{thm:main1}.
  As in the case of the classifying stack, parts \eqref{thm:main2} and \eqref{thm:main3} follow from the exact sequence \eqref{eq:Gamma-quotient} and the identification $B\Gamma \rigidify T \cong B \bar{G}$. 
\end{proof}

\begin{corollary} \label{prop:quotient-stack-case}
  \Cref{thm:main} holds for $\cX = [U/\Gamma]$ where $U$ is a connected, reduced, and separated algebraic space $U$ of finite type over $\base$. 
\end{corollary}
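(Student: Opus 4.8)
The plan is to deduce this corollary directly from \Cref{prop:quotient-stack-case-alt}, by checking that in the present situation the hypotheses of \Cref{thm:main} specialize to those of \Cref{thm:main-alt}. Since \Cref{thm:main-alt} has already been established for quotient stacks $[U/\Gamma]$ with $U$ connected and quasi-separated, and since a separated $U$ is in particular quasi-separated, the entire task reduces to verifying that the $\bG_m^n$-action on $\cX = [U/\Gamma]$ is trivial after a reparameterization of $\bG_m^n$.

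First I would record that, because $U$ is reduced, separated, and of finite type over $\base$ and $G$ is finite, the quotient $X = [U/G]$ is a reduced, separated Deligne--Mumford stack of finite type, with coarse moduli space $X_{\cms} = U/G$. The hypothesis of \Cref{thm:main} that the $\bG_m^n$-action on $X_{\cms}$ is trivial is then exactly condition (c) of \Cref{rmk:triviality} for the action of $T = \bG_m^n$ on the separated Deligne--Mumford stack $X$. Since $X$ is reduced, that remark supplies the implication (c) $\Longrightarrow$ (b): there is a reparameterization $T' \to \bG_m^n$ of tori after which the action on $X$ becomes trivial. This is precisely the hypothesis of \Cref{thm:main-alt}.

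With this verified, I would conclude by applying \Cref{prop:quotient-stack-case-alt} to $\cX = [U/\Gamma]$, whose output is the conclusion of \Cref{thm:main-alt}, and this conclusion is identical to that of \Cref{thm:main}. Note that the relative tameness assumption of \Cref{thm:main} is stronger than the relative Deligne--Mumford assumption of \Cref{thm:main-alt}, so no extra hypothesis needs to be checked. The only genuine content of the argument is the reduction step, which is already packaged as the implication (c) $\Longrightarrow$ (b) of \Cref{rmk:triviality}; the one subtlety to keep in mind is that reducedness and separatedness are essential there, since the conclusion of \Cref{thm:main} fails for nonreduced $\cX$ as shown by the $\mu_2$-example preceding \Cref{thm:main-alt}.
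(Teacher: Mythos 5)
Your proposal is correct and matches the paper's own (primary) argument: the paper likewise invokes \Cref{rmk:triviality} to get triviality of the action after a reparameterization and then applies \Cref{prop:quotient-stack-case-alt}. The paper additionally sketches an alternative direct argument (using that $U \to U/G$ is quasi-finite and $T$-equivariant, so the $T$-action on the reduced $U$ is trivial), but your route is the same as its main one.
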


\begin{proof}
  By \Cref{rmk:triviality}, there is a reparameterization $T' \to T$ of tori such that the $T'$-action on $X = [U/G]$ is trivial.  Therefore, the result follows from \Cref{prop:quotient-stack-case-alt}.  Alternatively, we can argue similarly to \Cref{prop:quotient-stack-case-alt}. By the hypotheses of \Cref{thm:main}, $X=[U/G] \to BG$ is a $\bG_m^n$-equivariant morphism such that the induced $\bG_m^n$-action on the quotient algebraic space $U/G = [U/G]_{\cms}$ is trivial.  The map $U \to U/G$ is quasi-finite and equivariant under $T \to \bG_m^n$.  Since $U$ is reduced, the $T$-action on $U$ must also be trivial.  From here, the proof proceeds as in \Cref{prop:quotient-stack-case-alt}.
\end{proof}

\section{A $\bG_m$-equivariant Luna \'etale slice theorem}

In this section, we prove an \'etale local structure theorem for morphisms (\Cref{thm:luna-etale-slice-theorem-for-morphisms}), from which we deduce a torus-equivariant Luna \'etale slice theorem (\Cref{cor:equivariant-luna}), which in turn is applied to prove the general case of \Cref{thm:main}.

\subsection{\'Etale local structure of morphisms}

\begin{proof}[Proof of \Cref{thm:luna-etale-slice-theorem-for-morphisms}]
    Applying \cite[Thm. 1.1]{ahr} to $(\cY,y)$ yields an \'etale morphism 
    \begin{equation} \label{eqn:etale-chart1}
      ([\Spec B / G_y], y') \to (\cY,y)
    \end{equation}
    inducing an isomorphism of stabilizer groups at $y'$. Applying loc. cit. 
    again to the base change 
    $[\Spec B / G_y] \times_{\cY} \cX$ yields an \'etale morphism 
    \begin{equation} \label{eqn:etale-chart2}
      ([\Spec A / G_x], x') \to ([\Spec B / G_y] \times_{\cY} \cX,(y',x))
    \end{equation}
    inducing an isomorphism of stabilizer groups at $x'$. 
    The composition $[\Spec A/G_x] \to [\Spec B / G_y] \times_{\cY} \cX \to \cX$ is also \'etale and induces an isomorphism of stabilizer groups at $x'$.  It remains to verify that the morphism $[\Spec A/G_x] \to [\Spec B/G_y]$ is induced by a morphism $\Spec A \to \Spec B$ equivariant with respect to $G_x \to G_y$, or equivalently that the diagram
    $$\xymatrix{
      [\Spec A/G_x] \ar[r] \ar[d]   & BG_x \ar[d] \\
      [\Spec B/G_y] \ar[r]          & BG_y
    }$$
    is commutative.

    Let $p, q \co [\Spec A/G_x] \to BG_y$ denote the two composition morphisms, 
    and let $\cP$ and $\cQ$ denote the corresponding $G_y$-torsors over $[\Spec A/G_x]$.  Since \eqref{eqn:etale-chart1} and \eqref{eqn:etale-chart2} induce isomorphism of stabilizers at $y'$ and $x'$, there is a 2-isomorphism $p|_{BG_{x'}} \iso q|_{BG_{x'}}$ of the restrictions 
    of $p$ and $q$ along the inclusion $BG_{x'} \to [\Spec A/G_x]$ of the residual 
    gerbe of $x'$, and therefore an isomorphism 
    $\alpha_0 \co \cP|_{BG_{x'}} \to \cQ|_{BG_{x'}}$ of $G_y$-torsors.  The stack 
    $\cI := \uIsom_{[\Spec A/G_x]}(\cP, \cQ)$ parameterizing isomorphisms of $G_y$-torsors 
    is smooth and affine over $[\Spec A/G_x]$.  The isomorphism $\alpha_0$ defines a section 
    of $\cI \to [\Spec A/G_x]$ over $BG_x$. By \cite[Prop. 7.18]{ahr2}, there exists an 
    \'etale neighborhood $\Spec R \to \Spec A^{G_x}$ such that $\alpha_0$ extends to a 
    section of $\cI \to [\Spec A/G_x]$ over 
    $\Spec R \times_{\Spec A^{G_x}} [\Spec A/G_x]$. 
    Therefore, after replacing $A$ with $R \tensor_{A^{G_x}} A$, there is a 
    2-isomorphism of $p$ and $q$.

    The addendums follow from \cite[Prop. 5.7]{ahr2}.
\end{proof}

There is a stronger version when $\cX \to \cY$ is a smooth morphism of smooth noetherian algebraic stacks.  For a point $x \in \cX(\bk)$, we let $N_x$ denote the normal space to $x$, viewed as a $G_x$-representation.  Explicitly, as $x \in |\cX|$ is locally closed, there is an open neighborhood $\cU \subset \cX$ of $x$ such that the inclusion $B G_x \to \cU$ is a closed immersion defined by an ideal sheaf $\cI$, and we take $N_x = (\cI/\cI^2)^{\vee}$.  Recall also that a morphism $\cX \to \cY$ of algebraic stacks admitting good moduli spaces $X$ and $Y$ is called \emph{strongly \'etale} if the induced map $X \to Y$ is \'etale and $\cX \cong X \times_Y \cY$.

\begin{corollary}
  Let $f \co \cX \to \cY$ be a smooth morphism of smooth quasi-separated algebraic stacks with affine stabilizers.  Let $x \in \cX(\bk)$ and $y=f(x) \in \cY(\bk)$.  Assume that the stabilizers $G_x$ and $G_y$ are 
  linearly reductive.  Then there exist
  \begin{enumerate}[(1)]
    \item affine schemes $\Spec A$ and $\Spec B$ with actions of $G_x$ and $G_y$, respectively;
    \item $\bk$-points $x' \in \Spec A$ and $y' \in \Spec B$; 
    \item a morphism $g \co \Spec A \to \Spec B$ of affine schemes equivariant with respect to $G_x \to G_y$; and 
    \item \'etale morphisms $([\Spec A / G_x], x') \to (\cX,x)$ and  $([\Spec B / G_y], y') \to (\cY,y)$ inducing isomorphisms of stabilizer groups at $x'$ and $y'$; and
    \item strongly \'etale morphisms $([\Spec A / G_x], x') \to ([N_x/G_x],0)$ and  $([\Spec B / G_y], y') \to ([N_y/G_y],0)$.
  \end{enumerate}
  such that the diagram
  $$\xymatrix{
    [N_x/G_x] \ar[d]
      & [\Spec A/G_x] \ar[r] \ar[d]^g \ar[l]
      & \cX \ar[d]^f \\
    [N_y/G_y]
      & [\Spec B/G_y] \ar[r]  \ar[l]
      & \cY
  }$$
  is commutative.
  Moreover, if $\cX$ and $\cY$ have separated diagonal (resp. affine diagonal), then the morphisms $[\Spec A/G_x] \to \cX$ and $[\Spec B/G_y] \to \cY$ can be arranged to be representable (resp. affine).
\end{corollary}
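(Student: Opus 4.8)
The plan is to repeat the proof of \Cref{thm:luna-etale-slice-theorem-for-morphisms} almost verbatim, but to feed in the smooth refinement of the Luna \'etale slice theorem \cite[Thm.~1.2]{ahr} in place of \cite[Thm.~1.1]{ahr} at each stage, and to use a single base change in order to match up the normal spaces. The only genuinely new assertion relative to the theorem is item (5), namely the strong \'etaleness over $[N_x/G_x]$ and $[N_y/G_y]$; the existence of the charts, the commutativity of the square, and the descent of $g$ to an equivariant affine morphism will come essentially for free from the construction below.

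First I would apply the smooth slice theorem to $(\cY, y)$: since $\cY$ is smooth and $G_y$ is linearly reductive, this yields an affine scheme $\Spec B$ with a $G_y$-action, a point $y'$, an \'etale morphism $([\Spec B/G_y], y') \to (\cY, y)$ inducing an isomorphism of stabilizers, and a strongly \'etale morphism $([\Spec B/G_y], y') \to ([N_y/G_y], 0)$. As $[N_y/G_y]$ is smooth and strongly \'etale morphisms are in particular \'etale, $\Spec B$ is smooth. Next I form the fiber product $\cX' := \cX \times_{\cY} [\Spec B/G_y]$, with projections $a \co \cX' \to \cX$ and $b \co \cX' \to [\Spec B/G_y]$. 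Here $a$ is \'etale (base change of the chart for $\cY$) and $b$ is smooth (base change of $f$), so $\cX'$ is smooth; the point $x'' := (x, y')$ has stabilizer $G_{x''} = G_x \times_{G_y} G_{y'} \cong G_x$. Applying the smooth slice theorem to $(\cX', x'')$ then produces an affine $\Spec A$ with $G_x$-action, a point $x'$, an \'etale chart $([\Spec A/G_x], x') \to (\cX', x'')$ inducing an isomorphism of stabilizers, and a strongly \'etale morphism $([\Spec A/G_x], x') \to ([N_{x''}/G_x], 0)$.

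The chart for $\cX$ is then the composite $[\Spec A/G_x] \to \cX' \xrightarrow{a} \cX$, which is \'etale and an isomorphism on stabilizers at $x'$, and I set $g$ to be the composite $[\Spec A/G_x] \to \cX' \xrightarrow{b} [\Spec B/G_y]$, so that the square involving $f$ commutes by the universal property of the fiber product. The crucial point for item (5) is that, because $a$ is \'etale, it induces a $G_x$-equivariant isomorphism of normal spaces $N_{x''} \cong N_x$; hence the morphism produced above is a strongly \'etale morphism $([\Spec A/G_x], x') \to ([N_x/G_x], 0)$, as required, and the analogous statement on the $\cY$-side is what Step~1 provides.

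Finally I would upgrade $g$ to a morphism of affine schemes equivariant with respect to $G_x \to G_y$. This is exactly the last part of the proof of \Cref{thm:luna-etale-slice-theorem-for-morphisms}: one must check that the outer square to $BG_x$ and $BG_y$ commutes, which is arranged by comparing the two induced $G_y$-torsors $\cP, \cQ$ on $[\Spec A/G_x]$ via the smooth affine stack $\cI = \uIsom(\cP, \cQ)$ and extending the tautological section over $BG_{x'}$ to an \'etale neighborhood of $\Spec A^{G_x}$ using \cite[Prop.~7.18]{ahr2}. The main obstacle is precisely this coordination step, since the shrinking of $\Spec A$ by an \'etale base change along $\Spec A^{G_x}$ must be shown not to destroy strong \'etaleness over $[N_x/G_x]$; this holds because strong \'etaleness is defined through the good moduli spaces and is stable under \'etale base change on them, so all the properties established above survive. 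The representability (resp. affineness) addendum then follows from \cite[Prop.~5.7]{ahr2} exactly as in the theorem.
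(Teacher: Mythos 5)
Your construction follows essentially the same route as the paper: rerun the proof of \Cref{thm:luna-etale-slice-theorem-for-morphisms} with the smooth refinement \cite[Thm.~1.2]{ahr} substituted for \cite[Thm.~1.1]{ahr}, identify $N_{x''}\cong N_x$ along the \'etale projection from the fiber product, and reuse the $\uIsom$-stack argument to make $g$ equivariant. Those parts are fine, and your observation that the final \'etale shrinking of $\Spec A^{G_x}$ preserves strong \'etaleness (because a composition of strongly \'etale morphisms is strongly \'etale) is a worthwhile point that the paper leaves implicit.

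However, there is a genuine omission: you assert that item (5) is ``the only genuinely new assertion relative to the theorem,'' but the corollary also claims that the \emph{left-hand} square
$$\xymatrix{
  [N_x/G_x] \ar[d] & [\Spec A/G_x] \ar[l] \ar[d]^g \\
  [N_y/G_y] & [\Spec B/G_y] \ar[l]
}$$
commutes, where the vertical arrow on the left is the natural map induced by $df$. Your construction produces the strongly \'etale chart $[\Spec A/G_x]\to[N_{x''}/G_x]\cong[N_x/G_x]$ by applying \cite[Thm.~1.2]{ahr} to $\cX'=\cX\times_{\cY}[\Spec B/G_y]$, and this chart carries no a priori compatibility with the composite $[\Spec A/G_x]\xrightarrow{g}[\Spec B/G_y]\to[N_y/G_y]$: the two resulting morphisms $[\Spec A/G_x]\to[N_y/G_y]$ agree only on the residual gerbe $BG_{x'}$, and one must extend that agreement to an \'etale neighborhood. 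This is exactly why the paper's proof adds the sentence that ``the left diagram can be arranged to be commutative by the same argument used in the proof of \Cref{thm:luna-etale-slice-theorem-for-morphisms},'' i.e.\ a second application of the section-extension argument via \cite[Prop.~7.18]{ahr2} (now for the stack of $2$-isomorphisms between two maps to $[N_y/G_y]$ rather than to $BG_y$), after which one checks as you do that the extra shrinking does not destroy the properties already established. Without this step your proof does not establish the commutativity of the full diagram in the statement.
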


\begin{remark}
  The above theorem was independently proved in \cite[Thm.~2.1]{dCHN}.
\end{remark}

\begin{proof}
  The existence of the right commutative diagram follows from \Cref{thm:luna-etale-slice-theorem-for-morphisms}.  The existence of the strongly \'etale morphisms $([\Spec A / G_x], x') \to ([N_x/G_x],0)$ and  $([\Spec B / G_y], y') \to ([N_y/G_y],0)$ is a consequence of \cite[Thm.~1.2]{ahr}.  Under the natural map $[N_x/G_x] \to [N_y/G_y]$, the left diagram can be arranged to be commutative by the same argument used in the proof of \Cref{thm:luna-etale-slice-theorem-for-morphisms}.
\end{proof}

\subsection{A torus-equivariant Luna \'etale slice theorem}

We record a special case of \Cref{thm:luna-etale-slice-theorem-for-morphisms} when the target $\cY$ is the classifying stack of a torus $T$. 
Let $X$ be a quasi-separated and locally of finite type algebraic stack with a $T$-action, and set $\cX = [X/T]$ to be the quotient stack over $BT$. For $x \in X(\bk)$, we let $G_x = \Aut_{X(\bk)}(x)$, $\Gamma_x = \Aut_{\cX(\bk)}(x)$, and $T_x = \im(\Gamma_x \to T)$.   In other words, $G_x$ and $\Gamma_x$ sit in the following exact sequences:
\begin{equation*}
  1 \to G_x \to \Gamma_x \to T_x \to 1 \quad \text{and} \quad
  1 \to T_x \to T \to T/T_x \to 1. 
\end{equation*}
Observe that if $W$ is a scheme with a $\Gamma_x$-action, then $[W/\Gamma_x]$ is naturally an algebraic stack over $BT$ via $[W/\Gamma_x] \to B\Gamma_x \to BT$. The fiber product 
$[W/\Gamma_x] \times_{BT} \Spec \bk$ is isomorphic to $[(W \times T)/\Gamma_x]$, where $\Gamma_x$ acts diagonally, and inherits a $T$-action such that $[W/\Gamma_x] \cong [ [(W \times T)/\Gamma_x] / T]$.

\begin{corollary}
  \label{cor:equivariant-luna}
  Let $\cX = [X/T]$, where $X$ is a quasi-separated and locally of finite type algebraic stack over $\bk$ with affine stabilizers and with an action by a torus $T$. 
  Let $x \in \cX(\base)$ be a point with linearly reductive stabilizer (which is automatic if $X$ is a tame Deligne--Mumford stack). 
  Then there exists an affine scheme $W$ with a $\Gamma_x$-action and an \'etale morphism $([W/\Gamma_x],w) \to (\cX,x)$ over $BT$ inducing an isomorphism of stabilizer groups at $w$.  \epf
\end{corollary}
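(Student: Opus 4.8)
The plan is to deduce the corollary as the special case $\cY = BT$ of \Cref{thm:luna-etale-slice-theorem-for-morphisms} applied to the structure morphism $f \co \cX = [X/T] \to BT$. The hypotheses of that theorem are inherited: $\cX = [X/T]$ is quasi-separated, locally of finite type, and has affine stabilizers, and so does $BT$. First I would record the input data: the point $y = f(x) \in BT(\bk)$ is the unique point of $BT$, whose stabilizer is $T$, while the stabilizer of $x$ in $\cX$ is $\Gamma_x$, and the map on stabilizers induced by $f$ is the composition $\Gamma_x \to T_x \into T$ from the exact sequences preceding the statement. To invoke the theorem I must check that both stabilizers are linearly reductive: $T$ is a torus and hence linearly reductive, while $\Gamma_x$ is linearly reductive by hypothesis. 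When $X$ is a tame Deligne--Mumford stack this is automatic, since then $G_x = \Aut_X(x)$ is finite and linearly reductive, $T_x \subset T$ is a subtorus, and the extension $\Gamma_x$ of $T_x$ by $G_x$ --- a group whose identity component is a torus and whose component group is finite of order prime to the characteristic --- is linearly reductive.

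Next I would apply the theorem, exploiting that $BT = [\Spec \bk / T]$ is already the quotient of a point by the linearly reductive group $T$. In the construction underlying \Cref{thm:luna-etale-slice-theorem-for-morphisms} one first chooses an \'etale chart of $(\cY, y)$; for $\cY = BT$ I would take this to be the identity $BT \to BT$ (that is, $\Spec B = \Spec \bk$ with trivial $T$-action), which is legitimate as it is \'etale and induces an isomorphism of stabilizers at $y$. With this choice the base change $[\Spec B/T] \times_{\cY} \cX$ is $\cX$ itself, so the second step amounts to applying the slice theorem to $\cX$ directly, producing an affine scheme $\Spec A$ with a $\Gamma_x$-action, a point $w$, and an \'etale morphism $([\Spec A/\Gamma_x], w) \to (\cX, x)$ inducing an isomorphism of stabilizers at $w$. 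Setting $W = \Spec A$ gives the desired chart; it remains only to check that it is a morphism over $BT$.

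This last step is where the only real subtlety lies. With the identity chart of $BT$ chosen above, the morphism $g$ in the commutative diagram of \Cref{thm:luna-etale-slice-theorem-for-morphisms} is the map $[\Spec A/\Gamma_x] \to [\Spec \bk / T] = BT$ induced by the ($\Gamma_x \to T$)-equivariant structure morphism $\Spec A \to \Spec \bk$; hence $g$ is precisely the canonical morphism $[W/\Gamma_x] \to B\Gamma_x \to BT$ that equips $[W/\Gamma_x]$ with its $BT$-structure, as described before the statement. Commutativity of the right-hand square of the diagram then identifies the composite $[W/\Gamma_x] \to \cX \xrightarrow{f} BT$ with $g$, which is exactly the assertion that $[W/\Gamma_x] \to \cX$ is a morphism over $BT$. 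I expect this compatibility to be the crux of the argument, and it is precisely what the morphism version of the slice theorem supplies over a naive application of the AHR slice theorem to $\cX$ alone, which would yield a chart with no a priori control over the map to $BT$.
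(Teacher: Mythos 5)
Your proposal is correct and matches the paper's intended argument: the corollary is stated in the paper as an immediate special case of \Cref{thm:luna-etale-slice-theorem-for-morphisms} with $\cY = BT$ (hence the \qed right after the statement), and your choice of the identity chart $BT \to BT$ together with the commutativity of the right-hand square is exactly how the $BT$-structure on the chart $[W/\Gamma_x] \to \cX$ is obtained. The only quibble is the parenthetical description of $\Gamma_x$ in the tame case (in positive characteristic $G_x$ may be a non-\'etale linearly reductive finite group scheme such as $\mu_p$, so ``component group of order prime to the characteristic'' is not quite the right characterization), but your actual argument --- an extension of the linearly reductive groups $T_x$ by $G_x$ is linearly reductive --- is the correct one.
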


\begin{remark} The following are special cases of the above corollary:
  \begin{enumerate}[(1)]
      \item If the $\bG_m$-action on $X$ is trivial, i.e. $\cX = X \times B \bG_m$, then this follows from the Luna \'etale slice theorem for algebraic stacks \cite[Thm.~1.1]{ahr}.
      \item If $X$ is an algebraic space, then this is Sumihiro's theorem for torus actions generalized to algebraic spaces \cite[Thm. 4.1]{ahr}.   
  \end{enumerate}
\end{remark}

\subsection{Proof of the general case of \Cref{thm:main}}

\begin{proof}[Proof of \Cref{thm:main}]
  For every $x \in \cX$, we use \Cref{cor:equivariant-luna} to obtain an \'etale morphism
  \begin{equation} \label{eqn:etale-ngbd}
    \cU = [\Spec A/\Gamma_x] = [U/\bG_m^n] \to [X/\bG_m^n] = \cX 
  \end{equation}
  over $B \bG_m^n$ and a preimage $u \in \cU(\base)$ of $x$, where $U = [\Spec A/G_x]$.  By \Cref{prop:quotient-stack-case}, the theorem holds for $\cU$.  This gives a unique tuple $r = (r_1, \ldots, r_n)$ and a central closed subgroup $\mu_{r,\cU} \subset I_{\cU/B\bG_m^n}$.  Moreover, the construction of the subgroup $\mu_{r,\cU}$ is canonical: writing $\Gamma_x$ as an extension $1 \to G_x \to \Gamma_x \to \bG_m^n \to 1$, then $\mu_r = G_x \cap \Gamma_x^0 \subset \Gamma_x$ acts trivially on $\Spec A$ and this defines $\mu_{r,\cU}$.  As $\cX$ is connected, the tuple $r = (r_1, \ldots, r_n)$ is the same for every \'etale neighborhood.  Moreover, covering $\cX$ with \'etale neighborhoods as in \eqref{eqn:etale-ngbd}, we have canonical isomorphisms of the subgroups of the inertia over intersections, and this provides descent data for the construction of a central closed subgroup $\mu_{r,\cX} \subset I_{\cX/B\bG_m^n}$.  
  
  The rigidification $\cX \rigidify \mu_{r,\cX}$ is the quotient stack of $X \rigidify \mu_{r,X}$ by $\bG_m^n$.  An \'etale neighborhood \eqref{eqn:etale-ngbd} induces \'etale neighborhoods $\cU \rigidify \mu_{r,\cU}  \to \cX \rigidify \mu_{r,\cX}$ and $U \rigidify \mu_{r,U}  \to X \rigidify \mu_{r,X}$.  Since $\bG_m^n$ acts trivially on $U \rigidify \mu_{r,U}$, it acts trivially on $Y:=X \rigidify \mu_{r,X}$.  This gives a factorization 
  $$
    \cX \to \cX \rigidify \mu_{r,\cX}  \cong Y \times B\bG_m^n \to X_{\cms} \times B\bG_m^n, 
  $$
  which satisfies the universal property by \Cref{prop:universal-properties}, yielding part \eqref{thm:main1}.  The factorizations in parts \eqref{thm:main2} and \eqref{thm:main3} also follow from \'etale descent.
\end{proof}

\bibliography{refs}
\bibliographystyle{amsalpha}
\end{document}